\newtheorem{thm}{Theorem}[section]
\newtheorem{cor}[thm]{Corollary}
\newtheorem{lem}[thm]{Lemma}
\newtheorem{prop}[thm]{Proposition}
\theoremstyle{definition}
\newtheorem{defn}[thm]{Definition}
\theoremstyle{definition}
\newtheorem{rem}[thm]{Remark}
\theoremstyle{definition}
\newtheorem{exam}[thm]{Example}
\def\R{\mathbb R}
\title[Affine metrics of locally strictly convex surfaces]{Affine metrics of locally strictly convex surfaces in affine 4-space}
\author{Juan J. Nuño-Ballesteros, Luis Sánchez}
\address{Departament de Geometria i Topologia,
Universitat de Val\`encia, Campus de Burjassot, 46100 Burjassot
SPAIN}
\email{Juan.Nuno@uv.es}
\address{Universidade de S\~ao Paulo, ICMC-SME, Caixa Postal 668,
13560-970 S\~ao Carlos (SP), BRAZIL}
\email{luisesan@icmc.usp.br}
\thanks{The first author has been partially supported by DGICYT Grant MTM2012-33073. The second author has been partially supported by FAPESP Grant BEPE 2011/21126-7.}
\keywords{Affine metric, affine normal plane, affine semiumbilical}
\subjclass[2000]{Primary 53A15; Secondary 53A07, 58K05}
\begin{document}

\maketitle

\begin{abstract}
We introduce a new family of affine metrics on a locally strictly convex surface $M$ in affine 4-space. Then, we define the symmetric and antisymmetric equiaffine planes associated with each metric. We show that if $M$ is immersed in a locally strictly convex hyperquadric, then the symmetric and the antisymmetric planes coincide and contain the affine normal to the hyperquadric. In particular, any surface immersed in a locally strictly convex hyperquadric is affine semiumbilical with respect to the symmetric or antisymmetric equiaffine planes.
\end{abstract}

\section{Introduction}

The main purpose in affine differential geometry is the study of properties of submanifolds $M^n$ in $m$-affine space that are invariant under the group of unimodular affine transformations. The main results for the classical theory of affine hypersurfaces can be seen, for instance, in \cite{Da,Li,N}.

Concerning submanifolds of codimension 2 there are few results. Nomizu and Vrancken  in \cite{N-V} developed an affine theory for surfaces in $\R^4$. They used the affine metric of Burstin and Mayer \cite{BM}, which is affine invariant, to construct the affine normal plane. However, this affine metric and the corresponding affine normal plane present several problems if the surface is locally strictly convex (i.e., at each point $p\in M$ there is a tangent hyperplane with a non-degenerate contact which locally supports $M$).
%namely:
%\begin{enumerate}
%\item In order to define the affine metric, we need that the surface is non-degenerate, in particular, $M$ cannot have inflections (points where the two second fundamental forms are collinear). But it is well known that any locally strictly convex compact surface $M$ with Euler characteristic $\chi(M)\ne0$ has at least an inflection (because of the Poincaré-Hopf formula), see \cite{MRR}.
%
%\item Even if $M$ is non-degenerate, the affine metric is indefinite when $M$ is locally strictly convex. This is the opposite of what you expect, for instance, if $M$ is contained in a locally strictly convex hypersurface $N$, then the affine metric of $N$ is positive definite.
%
%\item If $M$ is contained in a hypersurface $N$, you expect also some type of compatibility between the affine normals. This is important, for instance, if you want to consider contacts of the surface with affine hyperspheres. However, the affine normal plane to $M$ does not contain the affine normal vector to $N$ in general (see remark \ref{rem:exam}).
%\end{enumerate}

The first point is that in order to define the affine metric, we need that the surface is non-degenerate, in particular, $M$ cannot have inflections (points where the two second fundamental forms are collinear). But it is well known that any locally strictly convex compact surface $M$ with Euler characteristic $\chi(M)\ne0$ has at least an inflection, because of the Poincaré-Hopf formula (see \cite{MRR}).

Another point is that even if $M$ is non-degenerate, the affine metric is indefinite when $M$ is locally strictly convex. This is the opposite of what you expect, for instance, if $M$ is contained in a locally strictly convex hypersurface $N$, then the affine metric of $N$ is positive definite.

Finally, if $M$ is contained in a hypersurface $N$, you expect also some type of compatibility between the affine normals. This is important, for instance, if you want to consider contacts of the surface with affine hyperspheres. However, the affine normal plane to $M$ does not contain the affine normal vector to $N$ in general (see remark \ref{rem:exam}).

Our interest to study affine differential geometry of surfaces $M\subset\R^4$ comes from the understanding of the asymptotic configuration of $M$ near an inflection. There is a conjecture (see \cite{GR}) that any locally strictly convex surface homeomorphic to the sphere has at least two inflections. It is well known that a positive answer to this conjecture should imply a proof of the celebrated Carathéodory conjecture that any  convex compact surface $M\subset\R^3$ has at least two umbilic points (see for instance \cite{Bl}).

Partial proofs for the conjecture in $\R^4$ can be found in \cite{GMRR} for generic surfaces or in \cite{NB} for semiumbilical surfaces in the Euclidean sense (i.e., there is a non zero normal vector field whose shape operator is a multiple of the identity). The problem with the result in \cite{NB} is that the semiumbilical condition is not affine invariant, although the conjecture in $\R^4$ itself is affine in nature. Surfaces immersed in an Euclidean hypersphere or surfaces given by a product of two plane curves are examples of semiumbilical surfaces. However, surfaces immersed in other strictly convex hyperquadrics (like elliptic paraboloids or hyperboloids of two sheets) are not semiumbilical in general.

We introduce a new family of affine metrics $g_\xi$ on a locally strictly convex surface $M\subset\R^4$ which are positive definite. Here, $\xi$ is a transversal vector field such that $\xi$ and $T_pM$ span a local support hyperplane with non-degenerate contact at $p$. We show that when $M$ is immersed in a locally strictly convex hypersurface $N$, then there is a natural choice of $\xi$ in such a way that $g_\xi$ coincides with the Blasche metric of $N$ restricted to $M$.

For each affine metric $g_\xi$, we define the symmetric and the antisymmetric equiaffine planes, by using analogous arguments to that of Nomizu and Vrancken in \cite{N-V}. We also obtain algorithms to compute these normal planes.
The main result is that if $M$ is immersed in a locally strictly convex hyperquadric $N$, then the symmetric and the antisymmetric equiaffine planes coincide and contain the affine normal vector to $N$. As a consequence, any surface contained in a locally strictly convex hyperquadric is affine semiumbilical with respect to the symmetric or antisymmetric equiaffine planes. Another class of surfaces with the same property are those given by a product of two plane curves, hence our definition of affine semiumbilical surface has analogous properties as in the Euclidean case.

\section{Preliminaries}

Let $\R^4$ be the affine 4-space %associated with the Euclidean 4-space %
and $D$ the usual flat connection on $\R^4.$ Let $M\subset \R^4$ be an immersed surface and let
$\sigma$ be a transversal plane bundle on $M$. Then, for all
$p \in  M$,  $\sigma_{p}\subset T_p\R^4$   is a plane such that
\begin{equation*}
    T_p \R^{4}=T_{p}M \oplus\sigma_{p}, %\mbox{ ( direct sum ) }
\end{equation*}
and for all   tangent vector fields $X,Y$ on $M$,
\begin{equation*}
    (D_{X}Y)_{p}=(\nabla_{X}Y)_{p}+  h(X,Y)_{p},
\end{equation*}
where   $(\nabla_{X}Y)_{p}\in T_pM$ and $h(X,Y)_{p}\in \sigma_p,$ for all $p \in M$.
\medskip

We note that for $p\in M$, there are $\xi_{1},\xi_{2}$ transversal vector fields defined on some neighborhood  $U_p$ such that: $\sigma_q =\mbox{span} \{ \xi_{1}(q), \xi_{2}(q)\}$, $\forall q \in U_p$.
\medskip

Then for tangent vector fields $X,Y$ on $M$ we have:
\begin{align}
D_{X}Y &= \nabla_{X} Y + h^{1}(X,Y)\xi_{1}+ h^{2}(X,Y)\xi_{2}\label{decom1}, \\
D_{X}\xi_{1} &= -S_{1}X +\tau_{1}^{1}(X)\xi_{1}+\tau_{1}^{2}(X)\xi_{2}\label{decom2},  \\
D_{X}\xi_{2} &= -S_{2}X +\tau_{2}^{1}(X)\xi_{1}+\tau_{2}^{2}(X)\xi_{2}\label{decom3},
\end{align}
where $\nabla=\nabla(\sigma)$ is a torsion free affine connection, $h^{1},h^{2}$ are bilinear symmetric forms, $S_{1},S_{2}$ are $(1,1)$ tensor fields, and $\tau_{i}^{j}$ are $1$-forms  on $M$.    We call $\nabla$  the affine connection induced by the transversal plane bundle $\sigma.$

For a transversal vector field $\xi$, we can write
\[ D_{X}\xi = -S_{\xi}X +\nabla^{\bot}_{X}\xi,\]
where $-S_{\xi}X $ is the tangent component of $D_{X}\xi$ and $\nabla^{\bot}_{X}\xi$ is the $\sigma$-component of $D_{X}\xi$. The operator \(-S_{\xi}\) is linear (in fact, a $(1,1)$-tensor field)   and is called shape o\-pe\-ra\-tor.   {We also call $\nabla^{\bot}$ the } affine normal connection induced by the transversal plane bundle $\sigma$.

We can consider the curvature tensor of the normal connection,    {called}\textit{ normal curvature tensor}
\[R_{\nabla^{\bot}}:T_p  M\times T_p M  \times \sigma_p\rightarrow \sigma_p ,\]
given by
\[R_{\nabla^{\bot}}(X,Y)\nu=\nabla^{\bot}_X (\nabla^{\bot}_Y \nu)-\nabla^{\bot}_Y (\nabla^{\bot}_X \nu)-\nabla^{\bot}_{[X,Y]}\nu.\]
Since $\R^4$ has vanishing curvature, we obtain
\[R_{\nabla^{\bot}}(X,Y)\nu=h(X,S_{\nu} Y)- h(Y,S_{\nu} X ).\]

\section{  {The metric of the} transversal vector field.}
In this section, we introduce   {a family of affine metrics} and the affine normal planes:   {the} antisymmetric and symmetric equiaffine  planes. We prove the existence   {and unicity} of these planes and provide algorithms to compute them.

\begin{defn}A surface $M\subset \R^4$ is called \emph{locally convex} at $p$ if there is a hyperplane $\pi$ of $\R^4$ such that $p\in \pi$ and $\pi$ supports $M$ in a neighborhood of $p$. If $\pi$ locally supports $M$ at $p$, it is obvious that $\pi$ is tangent to $M$ at $p$. If it has a non-degenerate contact (i.e., of Morse type), then we say that $M$ is   {\emph{locally strictly convex}} at $p$.
\end{defn}

%\medskip

Let $M\subset \R^4$ be a locally strictly  convex  surface, let $\mathfrak{u}=\{X_1,X_2\}$ be a local tangent frame of a point $p\in M$ and let $\xi$ be a transversal vector field on $M$.

\begin{defn}
We define the symmetric bilinear form $G_{\mathfrak{u}}$ on  $M$ to be
\[G_{\mathfrak{u}}(Y,Z)=[X_1,X_2,D_{Z}Y,\xi].\]
We fix $\xi$ such that $G_{\mathfrak{u}}$ is a positive definite quadratic form, this is possible because $M$   {is} locally strictly convex   {and we call such a $\xi$ a \emph{metric field}.}
We define the \textit{metric of   {the} transversal vector field}, denoted  by $g_{\xi}$, by
\[ g_{\xi}(Y,Z)=\frac{G_{\mathfrak{u}}(Y,Z)}{(\det_{\mathfrak{u}}G_{\mathfrak{u}})^{\frac{1}{4}}},\]
where $\det_{\mathfrak{u}}G_{\mathfrak{u}}= \det (G_{\mathfrak{u}}(X_i,X_j)).$
\end{defn}

\begin{lem} The quadratic form $g_{\xi}$ does not depend on the choice of   {the} tangent frame $\mathfrak{u}$.
\end{lem}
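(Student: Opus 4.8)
The plan is to track how both the numerator $G_{\mathfrak{u}}$ and the normalizing factor $(\det_{\mathfrak{u}}G_{\mathfrak{u}})^{1/4}$ transform under a change of tangent frame, and then to check that the two transformations cancel. First I would take a second local tangent frame $\tilde{\mathfrak{u}}=\{\tilde X_1,\tilde X_2\}$ defined on the same neighbourhood and write $\tilde X_i=\sum_j a_{ij}X_j$ for an invertible matrix $A=(a_{ij})$ of smooth functions. The crucial observation is that $D_ZY$ does not involve the frame at all, so only the slot $[X_1,X_2,\cdot,\cdot]$ is affected by the change. Since the bracket is the fixed (alternating, multilinear) volume form on $\R^4$, one has $[\tilde X_1,\tilde X_2,W,\xi]=(\det A)\,[X_1,X_2,W,\xi]$ for every $W$, because $\tilde X_1\wedge\tilde X_2=(\det A)\,X_1\wedge X_2$. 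Consequently $G_{\tilde{\mathfrak{u}}}=(\det A)\,G_{\mathfrak{u}}$ as bilinear forms on $M$.

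Next I would compute the normalizing factor. Writing $B=(G_{\mathfrak{u}}(X_i,X_j))$ for the Gram matrix of $G_{\mathfrak{u}}$ in the frame $\mathfrak{u}$, bilinearity gives $G_{\mathfrak{u}}(\tilde X_i,\tilde X_j)=(A B A^T)_{ij}$, and hence the Gram matrix of $G_{\tilde{\mathfrak{u}}}$ in the frame $\tilde{\mathfrak{u}}$ equals $(\det A)\,A B A^T$. Taking determinants of these $2\times 2$ matrices yields
\[ \det\nolimits_{\tilde{\mathfrak{u}}}G_{\tilde{\mathfrak{u}}}=(\det A)^2\det(A B A^T)=(\det A)^4\,\det\nolimits_{\mathfrak{u}}G_{\mathfrak{u}}, \]
so that, taking positive real fourth roots (both determinants are positive, as $G_{\mathfrak{u}}$ is positive definite), one gets $(\det_{\tilde{\mathfrak{u}}}G_{\tilde{\mathfrak{u}}})^{1/4}=|\det A|\,(\det_{\mathfrak{u}}G_{\mathfrak{u}})^{1/4}$.

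Finally I would substitute into the definition of $g_\xi$, obtaining
\[ g_\xi^{\tilde{\mathfrak{u}}}(Y,Z)=\frac{(\det A)\,G_{\mathfrak{u}}(Y,Z)}{|\det A|\,\bigl(\det\nolimits_{\mathfrak{u}}G_{\mathfrak{u}}\bigr)^{1/4}}=\sign(\det A)\,g_\xi^{\mathfrak{u}}(Y,Z). \]
The one point that requires care, and indeed the only genuine obstacle, is the factor $\sign(\det A)$: the fourth-root normalization only kills $|\det A|$, leaving the sign behind. This is resolved by the standing requirement that the frame be chosen so that $G_{\mathfrak{u}}$ is positive definite. Since $G_{\tilde{\mathfrak{u}}}=(\det A)\,G_{\mathfrak{u}}$, positive definiteness of both $G_{\mathfrak{u}}$ and $G_{\tilde{\mathfrak{u}}}$ forces $\det A>0$; thus any two admissible frames are related by a matrix of positive determinant, $\sign(\det A)=1$, and $g_\xi^{\tilde{\mathfrak{u}}}=g_\xi^{\mathfrak{u}}$, which proves the lemma.
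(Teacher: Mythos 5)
Your proof is correct and takes essentially the same route as the paper's: both track that a change of tangent frame multiplies $G_{\mathfrak{u}}$ by the determinant of the transition matrix and multiplies $\det_{\mathfrak{u}}G_{\mathfrak{u}}$ by the fourth power of that determinant, so the two factors cancel in $g_\xi$. You are in fact slightly more careful than the paper, whose last step implicitly uses $\bigl((ad-bc)^4\bigr)^{1/4}=ad-bc$ and thus silently assumes the transition determinant is positive --- precisely the sign ambiguity you resolve by observing that positive definiteness of $G$ in both frames forces $\det A>0$.
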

\begin{proof}
Let $\mathfrak{v}=\{Y_1,Y_2\}$ be   {another} local tangent frame on a neighborhood $U$  of $p \in M$, then there exist functions $a, b, c$ and $d$ with $ad- bc \neq0$, defined on $U$ such that $Y_1=a X_1+ b X_2$ and $Y_2=c X_1+d X_2$. Note that
\[G_{\mathfrak{v}}(Y,Z)=[Y_1,Y_2,D_{Z}Y,\xi]=(ad-bc)G_{\mathfrak{u}}(Y,Z).\]
By properties of   {the} determinant,   {it} follows that $\det_{\mathfrak{v}}G_{\mathfrak{v}}=(ad-bc)^2 \det_{\mathfrak{v}}G_{\mathfrak{u}}.$
On the other   {hand}, from a   {simple} computation  $\det_{\mathfrak{v}}G_{\mathfrak{u}}=(ad-bc)^2 \det_{\mathfrak{u}}G_{\mathfrak{u}}$, therefore
\[\textstyle{\det_{ \mathfrak{v}}G_{\mathfrak{v}}=(ad-bc)^4 \det_{\mathfrak{u}}G_{\mathfrak{u}}}.\]
Finally,
\[\frac{G_{\mathfrak{v}}(Y,Z)}{(\det_{\mathfrak{v}}G_{\mathfrak{v}})^{1/4}}=\frac{(ad-bc)G_{\mathfrak{u}}(Y,Z)}{((ad-bc)^4 \det_{\mathfrak{u}}G_{\mathfrak{u}})^{1/4}}=\frac{G_{\mathfrak{u}}(Y,Z)}{(\det_{\mathfrak{u}}G_{\mathfrak{u}})^{1/4}}.\]
\end{proof}

%We call \textit{metric field}  any transversal vector field $\xi$ such that the metric $g_{\xi}$ is positive definite.
%\medskip

\begin{rem}
Let $\xi\in\R^4$ be a metric field, the definition  of   {the} metric $g_\xi$ depends only on the equivalence class   {of $\xi$} in the quotient space $\R^4/T_p M$,   {which is a 2-dimensional vector space.}
In fact, if  $[\xi]=[\xi']$ then $\xi=\xi'+Z$, with $Z\in T_pM$ therefore $g_\xi=g_{\xi'}$.
  {Thus}, we denote $g_{[\xi]}=g_\xi$.

In this   {way}, the family of metrics $\{g_{[\xi]}\}_{[\xi]\in A}$ is parameterized by an open set of  $\R^4/T_pM$ given by:
\[
A=\{[\xi]\in \R^4/T_pM:\ g_{[\xi]}\text{ is positive definite }\}.
\]
This open subset $A$ is not empty whenever $M$  is strictly  convex in a neighborhood of $p$.
%\medskip
  {It follows from the definition} that the family of metrics $\{g_{[\xi]}\}_{[\xi]\in A}$ is an affine invariant of $M$ that does not depend   {on the chosen}  transversal plane bundle $\sigma$.
\end{rem}

%\textit{Metric of transversal vector field and the affine fundamental form.}
\begin{rem}   {Although the metric $g_{[\xi]}$ does not depend on the chosen transversal plane bundle $\sigma$, once we fix it, we can use it to compute the metric as a second fundamental form. In fact, let $\{\xi_1,\xi_2\}$ be a transversal frame for $\sigma_p$ and
let us denote the second fundamental forms by $h^1(X,Y)$ and $h^2(X,Y)$.} For all $(r,s)\in\R^2$ we denote
\[h_{r,s}(X,Y)=r h^1(X,Y)+s h^2(X,Y)\]
and  we consider the open subset $\tilde A\subset\R^2$ given by:
\[
\tilde A=\{(r,s)\in\R^2:\ h_{r,s}\text{ is positive definite }\}.
\]
We note that, for all  $[\xi]\in A$, there is an unique $(r,s)\in\tilde A$ such that  $g_{[\xi]}=h_{r,s}$. In fact, there is an unique representative $\xi\in \sigma_p$ of $[\xi]$ given by $\xi= b_1 \xi_1+b_2 \xi_2$ and therefore $(r,s)=\lambda(b_2,-b_1)$, where
\[
\lambda=\frac{[X_1,X_2,\xi_1,\xi_2]}{(\det_{\mathfrak{u}}G_{\mathfrak{u}})^{\frac14}}.
\]
\end{rem}

\begin{rem}
%\textit{Metric of transversal vector field and the support hyperplane.}
Let $M\subset \R^4$ be a locally strictly convex  surface and   {let} $\pi$ be a support hyperplane   {with non-degenerate contact at $p$}. Then there is a transversal vector field $\xi$ such that
\[\pi=\ker\{x \mapsto [Y_1,Y_2,x-p,\xi]\} \]
and $g_{\xi}$ is positive definite, where $\{Y_1,Y_2\}$  is a  tangent local frame on $M$. We say that $\xi$ determines the support hyperplane $\pi$. Note that $\lambda \xi$ determine the same hyperplane $\pi$. So, the support hyperplane $\pi$ determines a family of metrics $\left(g_{\lambda\xi}\right)_{\lambda}$.
\end{rem}

From now   {on}, we fix a metric field $\xi$ and consider a local orthonormal tangent frame  relative to   {the} metric $g_{\xi}$ on $M$,   {that} is, $\mathfrak{u}=\{X_1,X_2\}$   {is a tangent} frame defined on some neighborhood $U$ of $p\in M$ such that
\[g_{\xi}(X_i,X_j)=\delta_{ij}.\]

\begin{thm}\label{vetores-unicos}
Let $M\subset \R^4$ be a  locally strictly convex surface  and $\xi$ a metric field. Let $\mathfrak{u}=\{X_1,X_2\}$ be a local orthonormal  tangent frame of $g_{\xi}$ and   {let} $\sigma$ be an arbitrary transversal plane bundle. Then there exists   {a} unique local   {frame} $\{\xi_1,\xi_2\}$ of $\sigma$, such that
\[[X_1,X_2,\xi_1,\xi_2]=1, \hspace*{1.5cm} h^1(X_1,X_1)=0, \hspace*{1.5cm} -\xi_1\in [\xi],\]
\[\hspace*{0.7cm}h^2(X_1,X_1)=1, \hspace*{1.5cm} h^2(X_1,X_2)=0, \hspace*{1.5cm} h^2(X_2,X_2)=1.\]
\end{thm}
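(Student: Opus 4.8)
The plan is to determine $\xi_1$ and $\xi_2$ one after the other, turning the six scalar conditions into two geometric choices. Since $\sigma_p$ is a complement of $T_pM$ in $T_p\R^4$, the quotient projection restricts to an isomorphism $\sigma_p\to\R^4/T_pM$, so there is exactly one vector $\xi_1\in\sigma_p$ with class $[\xi_1]=-[\xi]$; this is precisely what the condition $-\xi_1\in[\xi]$ demands, and it fixes $\xi_1$ uniquely. Writing $\xi_1=-\xi+T$ with $T\in T_pM$, I would first record the key vanishing: the linear functional $\ell\colon\sigma_p\to\R$, $\ell(v)=[X_1,X_2,v,\xi]$, satisfies $\ell(\xi_1)=[X_1,X_2,-\xi+T,\xi]=0$, because the $\xi$-column is repeated and $X_1,X_2,T$ are three vectors of the tangent plane.

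Next I would connect $G_{\mathfrak u}$ with $\ell$. Decomposing $D_ZY=\nabla_ZY+h(Z,Y)$ and discarding the tangent part (which yields a vanishing determinant) gives $G_{\mathfrak u}(Y,Z)=\ell(h(Y,Z))$. Orthonormality pins down the normalizing constant: taking determinants in $g_\xi(X_i,X_j)=\delta_{ij}$ forces $\det_{\mathfrak u}G_{\mathfrak u}=1$, hence $G_{\mathfrak u}(X_i,X_j)=\delta_{ij}$. Now I would define $\xi_2$ to be the $\sigma$-component $h(X_1,X_1)$ of $D_{X_1}X_1$. This is exactly the choice imposed by $h^1(X_1,X_1)=0$ and $h^2(X_1,X_1)=1$, since together these read $h(X_1,X_1)=\xi_2$; in particular $\xi_2$ is uniquely determined once $\xi_1$ is, which gives uniqueness of the frame.

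It remains to see that the choice is admissible and that the remaining equations hold for free. Evaluating $\ell$ gives $\ell(\xi_2)=\ell(h(X_1,X_1))=G_{\mathfrak u}(X_1,X_1)=1$; since $\ell(\xi_1)=0\ne1=\ell(\xi_2)$, the pair $\{\xi_1,\xi_2\}$ is linearly independent and so is a genuine local frame of $\sigma$. The three conditions on $h^2$ then follow simultaneously, for from $\ell(\xi_1)=0$ and $\ell(\xi_2)=1$ we get $h^2(X_i,X_j)=\ell(h(X_i,X_j))=G_{\mathfrak u}(X_i,X_j)=\delta_{ij}$. Finally the volume normalization drops out of the same vanishing: $[X_1,X_2,\xi_1,\xi_2]=-[X_1,X_2,\xi,\xi_2]=[X_1,X_2,\xi_2,\xi]=\ell(\xi_2)=1$. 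The step I would treat most carefully---and the only place where something could go wrong---is the accounting that reconciles six scalar equations with the four real degrees of freedom in $\{\xi_1,\xi_2\}\subset\sigma_p$: the resolution is that the single identity $\ell(\xi_1)=0$, coming from $-\xi_1\in[\xi]$, is what simultaneously produces the volume normalization and collapses the three $h^2$-equations into the orthonormality relation $G_{\mathfrak u}(X_i,X_j)=\delta_{ij}$, so the apparent overdetermination is illusory.
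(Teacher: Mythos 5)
Your proof is correct, and it takes a genuinely different route from the paper's. The paper fixes an auxiliary frame $\{\nu_1,\nu_2\}$ of $\sigma$, writes the sought frame through a change of basis $\nu_1=\alpha\xi_1+\beta\xi_2$, $\nu_2=\varphi\xi_1+\psi\xi_2$, and solves the resulting linear equations for $\alpha,\beta,\varphi,\psi$ (first $\beta,\psi$ from the conditions $\overline{h}^2(X_1,X_1)=1$ and the class condition, then $\alpha,\varphi$ up to a factor pinned down by the volume normalization), verifying the remaining identities by coordinate computation. You instead construct the frame intrinsically: $\xi_1$ is the unique representative of $-[\xi]$ in $\sigma_p$, forced by the isomorphism $\sigma_p\cong\R^4/T_pM$, and $\xi_2$ is forced to equal $h(X_1,X_1)$, the $\sigma$-component of $D_{X_1}X_1$; everything else collapses onto the single identity $G_{\mathfrak{u}}=\ell\circ h$ for the functional $\ell(v)=[X_1,X_2,v,\xi]$, together with $\ell(\xi_1)=0$ and $\ell(\xi_2)=1$. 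Both arguments rest on the same two pillars --- orthonormality of $\mathfrak{u}$ giving $G_{\mathfrak{u}}(X_i,X_j)=\delta_{ij}$ (your determinant computation $(\det_{\mathfrak{u}}G_{\mathfrak{u}})^{1/2}=1$ appears verbatim in the paper), and the class condition on $\xi_1$ killing the $h^1$-contribution --- but your packaging buys a coordinate-free existence proof in which uniqueness is transparent, since both vectors are characterized without reference to any auxiliary frame, whereas the paper's computation buys explicit change-of-frame formulas relative to an arbitrary starting frame, in the same computational spirit as its later algorithmic remarks. Your closing remark about the apparent overdetermination (more scalar conditions than degrees of freedom, reconciled by $\ell(\xi_1)=0$ and orthonormality) is a point the paper leaves implicit.
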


\begin{proof}Let $p$ be a point in $M$ and let $\{\nu_1,\nu_2\}$ be   {any} local   {frame} of $\sigma$ in a neighborhood $U$ of $p$. We can {assume} that $X_1$ and $X_2$ are defined on $U$. Now, we  write
\[[\xi]= \lambda_3 \nu_1+ \lambda_4 \nu_2+ T_p M.\]
Using the notation:\hspace*{0.2cm}  $h^1(X_1,X_1)= a$, $ h^1(X_1,X_2)=b$, $h^1(X_2,X_2)=c$, $h^2(X_1,X_1)= e$, $h^2(X_1,X_2)=f$, $h^2(X_2,X_2)=g$ and $K=[X_1,X_2,\nu_1,\nu_2]$, we compute the bilinear form $G_{\mathfrak{u}}$:
\begin{align*}
  G_{\mathfrak{u}}(X_1,X_1) &= (a \lambda_4- e \lambda_3)K, \\
  G_{\mathfrak{u}}(X_1,X_2) &= (b \lambda_4- f \lambda_3)K, \\
  G_{\mathfrak{u}}(X_2,X_2) &= (c \lambda_4- g \lambda_3)K.
\end{align*}
  {By} using  the change
\[\nu_1=\alpha \xi_1+ \beta \xi_2, \hspace*{1.0cm} \nu_2= \varphi \xi_1+\psi \xi_2,\]
we obtain the affine fundamental forms in the new   {frame} $\{\xi_1,\xi_2\}$:
\[\overline{h}^1(X_1,X_1)=  \alpha a + \varphi e,  \hspace*{0.5cm} \overline{h}^1(X_1,X_2)=\alpha b + \varphi f,  \hspace{0.5cm} \overline{h}^1(X_2,X_2)=\alpha c + \varphi g, \]
\[\overline{h}^2(X_1,X_1)= \beta a + \psi e,  \hspace*{0.5cm} \overline{h}^2(X_1,X_2)=\beta b + \psi f, \hspace{0.5cm} \overline{h}^2(X_2,X_2)=\beta c + \psi g.\]
  {By} solving the following system:
\begin{align*}
  1 &= \beta a + \psi e,\\
  0 &= \beta \lambda_3+ \psi \lambda_4,
\end{align*}
we obtain
\[\beta=\frac{\lambda_4}{a \lambda_4- e \lambda_3} ,\hspace*{1.0cm}\psi=\frac{- \lambda_3}{a \lambda_4- e \lambda_3}.\]
  {We substitute} $\beta$ and $\psi$ in  $\overline{h}^2(X_i,X_j)$   {and} we prove that $\overline{h}^2(X_i,X_j)=\delta_{ij}.$ In fact,
\begin{align*}
\overline{h}^2(X_1,X_2)&=\beta b + \psi f =(\frac{ \lambda_4}{a \lambda_4- e \lambda_3})b + (\frac{-  \lambda_3}{a \lambda_4- e \lambda_3})f=\frac{ G_{\mathfrak{u}}(X_1,X_2)}{(a \lambda_4- e \lambda_3)K}\\
&=\frac{G_{\mathfrak{u}}(X_1,X_2)}{G_{\mathfrak{u}}(X_1,X_1)}=\frac{G_{\mathfrak{u}}(X_1,X_2)/(\det_{\mathfrak{u}}G_{\mathfrak{u}})^{1/4}}{G_{\mathfrak{u}}(X_1,X_1)/(det_{\mathfrak{u}}G_{\mathfrak{u}})^{1/4}}=\frac{g_{\xi}(X_1,X_2)}{g_{\xi}(X_1,X_1)}=0. \end{align*}

From the equation $0=\overline{h}^1(X_1,X_1)= \alpha a + \varphi e$ we can write $\alpha= R e$ and $\varphi= - R a$, therefore
\begin{align*}
[X_1,X_2, \nu_1,\nu_2]&=[X_1,X_2, \xi_1,\xi_2](\alpha \psi- \beta\varphi)=(\alpha \psi- \beta\varphi)\\&=( (R e)\psi- \beta( -R a))=R,
\end{align*}
we conclude $R=K,$ $\alpha= K e$ and $\varphi= - K a.$

It   {only} remains to prove that $[\xi]=-[\xi_1]$.
First we note that $G_{\mathfrak{u}}(X_1,X_2)=0$, because $\{X_1,X_2\}$ is a orthonormal tangent frame relative to $g_{\xi}$.
Moreover,
\[\textstyle{(\det_{\mathfrak{u}}G_{\mathfrak{u}})^{1/2}}=\frac{\det_{\mathfrak{u}}G_{\mathfrak{u}}}{(\det_{\mathfrak{u}}G_{\mathfrak{u}})^{1/2}}=\frac{G_{\mathfrak{u}}(X_1,X_1)}{(\det_{\mathfrak{u}}G_{\mathfrak{u}})^{1/4}}\frac{G_{\mathfrak{u}}(X_2,X_2)}{(\det_{\mathfrak{u}}G_{\mathfrak{u}})^{1/4}}=1.\]
It follows that
$$
\textstyle{\lambda_3 \alpha+\lambda_4 \varphi=\lambda_3 Ke -\lambda_4 K a=K(\lambda_3 e- \lambda_4 a)=-(\det_{\mathfrak{u}} G_{\mathfrak{u}})^{1/4}=-1.}
$$
Finally, we compute   {$[\xi]$:}
\begin{align*}
[\xi]&=\lambda_3 \nu_1+ \lambda_4 \nu_2+  T_p M\\
&=\lambda_3 (\alpha \xi_1+\beta \xi_2)+ \lambda_4 (\varphi \xi_1+\psi \xi_2)+T_p M\\
&=\underbrace{(\lambda_3 \alpha+\lambda_4 \varphi)}_{-1}\xi_1+ \underbrace{(\lambda_3 \beta+ \lambda_4 \psi)}_0\xi_2+T_pM.
\end{align*}
\end{proof}

\begin{lem}\label{lema-cambio de referencia tangente}Let $M \subset \R^4$ be a locally strictly  convex surface  and $\xi$ a metric field. Let $\mathfrak{u}=\{X_1,X_2\}$ and $\mathfrak{v}=\{Y_1,Y_2\}$ be two orthonormal frames and let $\sigma$ a transversal plane bundle. So we can write
\begin{align}
  Y_1 &= \cos \theta X_1+ \sin \theta X_2, \label{eqcambio1}  \\
  Y_2 &= \epsilon (-\sin \theta X_1+\cos \theta X_2).\label{eqcambio2}
\end{align}
where $\epsilon =\pm 1$. If we denote by $\{\xi_1,\xi_2\}$ (resp. $\{\overline{\xi}_1,\overline{\xi}_2\}$) the   {frame of theorem \ref{vetores-unicos}} corresponding to $\mathfrak{u}$ (resp. $\mathfrak{v}$), then
\begin{align*}
  \xi_1 &= \overline{\xi}_1, \\
  \xi_2 &= -(\sin 2\theta  h^1(X_1,X_2)+\sin^2\theta h^1(X_2,X_2))\overline{\xi}_1+\overline{\xi}_2,
\end{align*}
and also
\begin{align*}
  2\overline{h}^1(Y_1,Y_2) &= \epsilon(2 \cos2\theta h^1(X_1,X_2)+\sin2\theta h^1(X_2,X_2)), \\
  \overline{h}^1(Y_2,Y_2)  &=  \cos2\theta h^1(X_2,X_2)-2 \sin2\theta h^1(X_1,X_2),\\
4 \overline{h}^1(Y_1,Y_2)^2&+ \overline{h}^1(Y_2,Y_2)=4 h^1(X_1,X_2)^2+ h^1(X_2,X_2)^2.
\end{align*}
\end{lem}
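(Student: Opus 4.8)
The plan is to carry out a direct computation governed entirely by the change-of-frame relations \eqref{eqcambio1}--\eqref{eqcambio2} together with the uniqueness result of Theorem \ref{vetores-unicos}. Since both $\{\xi_1,\xi_2\}$ and $\{\overline\xi_1,\overline\xi_2\}$ are obtained from the \emph{same} metric field $\xi$ and the \emph{same} transversal plane bundle $\sigma$, they are the unique frames satisfying the normalizing conditions of Theorem \ref{vetores-unicos} relative to $\mathfrak u$ and $\mathfrak v$ respectively. In particular $-\xi_1$ and $-\overline\xi_1$ both represent the fixed class $[\xi]\in\R^4/T_pM$, which instantly forces $\xi_1=\overline\xi_1$; this disposes of the first relation. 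The real work is to determine $\xi_2$ in terms of $\overline\xi_1,\overline\xi_2$ and to track how the fundamental forms transform under the rotation.

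First I would compute the transformed second fundamental forms $\overline h^k(Y_i,Y_j)$ for $k=1,2$ by substituting \eqref{eqcambio1}--\eqref{eqcambio2} into the bilinearity of $h^1,h^2$, using that $h^k$ is symmetric. Since $\{\xi_1,\xi_2\}$ already satisfies $h^1(X_1,X_1)=0$, $h^2(X_1,X_1)=1$, $h^2(X_1,X_2)=0$, $h^2(X_2,X_2)=1$, the $h^2$ entries are just $\delta_{ij}$, so expanding $\overline h^2(Y_i,Y_j)$ (with the $\epsilon$ and the double-angle identities $\cos^2\theta-\sin^2\theta=\cos2\theta$, $2\sin\theta\cos\theta=\sin2\theta$) will show $\overline h^2(Y_i,Y_j)=\delta_{ij}$ as well, confirming that $\mathfrak v$ and $\{\overline\xi_1,\overline\xi_2\}$ do satisfy the $h^2$-normalization. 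For $\overline h^1$, plugging in $h^1(X_1,X_1)=0$ and abbreviating $b=h^1(X_1,X_2)$, $c=h^1(X_2,X_2)$, a routine expansion yields the stated expressions for $2\overline h^1(Y_1,Y_2)$ and $\overline h^1(Y_2,Y_2)$; the quantity $4\overline h^1(Y_1,Y_2)^2+\overline h^1(Y_2,Y_2)^2$ is then seen to be the sum of squares of a vector rotated by angle $2\theta$ (up to the harmless sign $\epsilon$), hence equals $4b^2+c^2$, giving the invariance relation. (Note the third displayed identity as written has an apparent typo, omitting a square on $\overline h^1(Y_2,Y_2)$; I would write it as $4\overline h^1(Y_1,Y_2)^2+\overline h^1(Y_2,Y_2)^2=4h^1(X_1,X_2)^2+h^1(X_2,X_2)^2$.)

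Next, to pin down $\xi_2$, I would observe that $\overline\xi_2=\xi_2+\mu\,\xi_1$ for some scalar $\mu$, since both frames share the same first vector and have the same induced volume $[Y_1,Y_2,\overline\xi_1,\overline\xi_2]=1=[X_1,X_2,\xi_1,\xi_2]$ (the rotation has determinant $\epsilon=\pm1$, which combines with the relation between the frames to preserve the bracket). The coefficient $\mu$ is then determined by the single remaining normalization $\overline h^1(Y_1,Y_1)=0$: decomposing $h=h^1\xi_1+h^2\xi_2$ and re-expressing in the barred frame shows that the $\overline\xi_1$-component of $h(Y_1,Y_1)$ must vanish, which forces $\mu=\sin2\theta\,h^1(X_1,X_2)+\sin^2\theta\,h^1(X_2,X_2)$, i.e. $\xi_2=-\mu\,\overline\xi_1+\overline\xi_2$ as claimed.

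The main obstacle I anticipate is purely bookkeeping: keeping the relation $h=\sum_k h^k\xi_k$ consistent across the two frames while simultaneously the $\xi_k$ themselves change. The cleanest route is to fix the tensor-valued object $h(Y_i,Y_j)\in\sigma_p$ and expand it \emph{once} in each basis, equating the $\xi_1$- and $\xi_2$-components; this avoids circular reasoning about which frame the coefficients $\overline h^k$ refer to. All the trigonometry reduces to the two double-angle identities above, so beyond careful sign-tracking (especially the factor $\epsilon$ in \eqref{eqcambio2}) there is no conceptual difficulty.
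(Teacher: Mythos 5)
Your proposal is correct and is essentially the paper's own proof. The class argument giving $\xi_1=\overline\xi_1$ is identical, and your plan to ``fix the $\sigma_p$-valued object $h(Y_i,Y_j)$ and expand it once in each basis'' is exactly the paper's comparison of $D_{Y_i}Y_j$ written in the two transversal frames (the tangential parts agree because $\nabla$ depends only on $\sigma$, so comparing the $D$'s is the same as comparing the $h$'s); the trigonometric bookkeeping is then the same, and your reading of the third displayed identity as a typo (a missing square on $\overline h^1(Y_2,Y_2)$) is right. The one step you handle differently is pinning down the $\xi_2$-component of $\overline\xi_2$: the paper reads it off from the normalization $\overline h^2(Y_1,Y_1)=1$ in the single comparison of $D_{Y_1}Y_1$, whereas you appeal to the volume condition $[Y_1,Y_2,\overline\xi_1,\overline\xi_2]=1=[X_1,X_2,\xi_1,\xi_2]$. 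A sign hides there: writing $\overline\xi_2=\mu\xi_1+\nu\xi_2$, the bracket gives $1=\nu\,[Y_1,Y_2,\xi_1,\xi_2]=\nu\epsilon$, i.e.\ $\nu=\epsilon$ rather than $\nu=1$, so your argument silently assumes $\epsilon=1$. In fact, when $\epsilon=-1$ the normalizations of theorem \ref{vetores-unicos} are mutually incompatible for the frame $\mathfrak{v}$ ($\overline h^2(Y_1,Y_1)=1$ forces $\nu=1$ while the bracket forces $\nu=\epsilon$), so the lemma implicitly requires $\mathfrak{u}$ and $\mathfrak{v}$ to be equally oriented; the paper's proof avoids your sign issue only because it never invokes the bracket, and it glosses over the same incompatibility. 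So your proof is exactly as valid as the paper's, with the recommendation that you derive $\nu=1$ from $\overline h^2(Y_1,Y_1)=1$ (as the paper does) instead of from the volume normalization, or else note explicitly that $\epsilon=1$ is forced.
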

\begin{proof} From theorem \ref{vetores-unicos},   {we have $[\xi_1]=-[\xi]=[\overline{\xi}_1]$.} Since $\xi_1$ and $\overline{\xi}_1$ belong to {the} same transversal plane we conclude that $\xi_1=\overline{\xi}_1$.
We compute now the affine connection in the different references to compare the references.   {By
using the frame $\{\overline{\xi}_1,\overline{\xi}_2\}$, it} follows from theorem \ref{vetores-unicos} that
\[D_{Y_1}Y_1=\nabla_{Y_1}Y_1+\overline{\xi}_2,\]
and   {by} using the reference $\{\xi_1,\xi_2\}$ and   {the} equation \eqref{decom1},
\[D_{Y_1}Y_1=\nabla_{Y_1}Y_1+h^1(Y_1,Y_1)\xi_1+h^2(Y_1,Y_1)\xi_2.\]
  {Hence, } $\overline{\xi}_2 = h^1(Y_1,Y_1)\xi_1+h^2(Y_1,Y_1)\xi_2$,   and from equation \eqref{eqcambio1} we obtain:
\begin{equation*}
   \overline{\xi}_2 = (\sin 2 \theta h^1(X_1,X_2)+\sin^2 \theta h^1(X_2,X_2))\xi_1+\xi_2.
\end{equation*}

Analogously, by   {comparing} $D_{Y_1}Y_2$ (and $D_{Y_2}Y_2$) in the   {two frames,}  we obtain:
\begin{equation*}
    \overline{h}^1(Y_1,Y_2)=\cos 2 \theta h^1(X_1,X_2)+ \sin \theta \cos \theta h^1(X_2,X_2),
\end{equation*}
\begin{equation*}
    \overline{h}^1(Y_2,Y_2) =  \cos2\theta h^1(X_2,X_2)-2 \sin2\theta h^1(X_1,X_2).
\end{equation*}
  {The last equality follows by direct computation.}
\end{proof}

Let $M\subset \R^4$ be a  locally strictly convex surface  and let $\xi$ be a metric field. Let $\mathfrak{u}=\{X_1,X_2\}$ be a local orthonormal  tangent frame. If we denote the corresponding transversal vector fields obtained by theorem \ref{vetores-unicos} by $\xi_1$ and $\xi_2$, we define the metric $g_{\mathfrak{u}}^{\bot}$ by setting
\begin{align*}
g_{\mathfrak u}^{\bot}(\xi_1,\xi_1)&= 1,\\
g_{\mathfrak u}^{\bot}(\xi_1,\xi_2)&=-\frac{1}{2}h^1(X_2,X_2),\\
g_{\mathfrak u}^{\bot}(\xi_2,\xi_2)&=4 h^1(X_1,X_2)^2+\frac{5}{4}h^1(X_2,X_2)^2.
\end{align*}
and extending it linearly on $\sigma.$

\begin{lem}\label{gbot} Take $\mathfrak{u}=\{X_1,X_2\}$, $\mathfrak{v}=\{Y_1,Y_2\}$, $\xi_1$, $\xi_2$, $\overline{\xi}_1$ and $\overline{\xi}_2$ as in  lemma \ref{lema-cambio de referencia tangente} and theorem \ref{vetores-unicos}. Then
\[g^{\bot}_{\mathfrak{u}}(\xi,\eta)=g^{\bot}_{\mathfrak{v}}(\xi,\eta).\]
\end{lem}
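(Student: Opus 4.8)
The plan is to exploit that $g^{\bot}_{\mathfrak{u}}$ and $g^{\bot}_{\mathfrak{v}}$ are symmetric bilinear forms on the two-dimensional space $\sigma_p$, so it suffices to check that they agree on the three pairs $(\xi_1,\xi_1)$, $(\xi_1,\xi_2)$ and $(\xi_2,\xi_2)$. I would expand $g^{\bot}_{\mathfrak{v}}$ on the frame $\{\xi_1,\xi_2\}$ by bilinearity, using the change-of-frame formulas of Lemma \ref{lema-cambio de referencia tangente}: namely $\xi_1=\overline{\xi}_1$ and $\xi_2=\overline{\xi}_2-\mu\,\overline{\xi}_1$, where $\mu=\sin 2\theta\,h^1(X_1,X_2)+\sin^2\theta\,h^1(X_2,X_2)$ is the transition coefficient appearing there.

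First I would record the single algebraic identity that drives everything,
\[
\mu=\tfrac12\bigl(h^1(X_2,X_2)-\overline{h}^1(Y_2,Y_2)\bigr),
\]
which follows from the formula $\overline{h}^1(Y_2,Y_2)=\cos 2\theta\,h^1(X_2,X_2)-2\sin 2\theta\,h^1(X_1,X_2)$ together with $1-\cos 2\theta=2\sin^2\theta$. With this in hand, the $(\xi_1,\xi_1)$ entry is immediate, since $g^{\bot}_{\mathfrak{v}}(\xi_1,\xi_1)=g^{\bot}_{\mathfrak{v}}(\overline{\xi}_1,\overline{\xi}_1)=1$. For the cross term, bilinearity gives $g^{\bot}_{\mathfrak{v}}(\xi_1,\xi_2)=g^{\bot}_{\mathfrak{v}}(\overline{\xi}_1,\overline{\xi}_2)-\mu\,g^{\bot}_{\mathfrak{v}}(\overline{\xi}_1,\overline{\xi}_1)=-\tfrac12\overline{h}^1(Y_2,Y_2)-\mu$, and the identity above rewrites this as $-\tfrac12 h^1(X_2,X_2)=g^{\bot}_{\mathfrak{u}}(\xi_1,\xi_2)$.

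The substantial computation is the $(\xi_2,\xi_2)$ entry. Expanding by bilinearity I would obtain
\[
g^{\bot}_{\mathfrak{v}}(\xi_2,\xi_2)=4\overline{h}^1(Y_1,Y_2)^2+\tfrac54\overline{h}^1(Y_2,Y_2)^2+\mu\,\overline{h}^1(Y_2,Y_2)+\mu^2.
\]
Here I would invoke the invariance $4\overline{h}^1(Y_1,Y_2)^2+\overline{h}^1(Y_2,Y_2)^2=4h^1(X_1,X_2)^2+h^1(X_2,X_2)^2$ from Lemma \ref{lema-cambio de referencia tangente} to eliminate $\overline{h}^1(Y_1,Y_2)$, collect the remaining barred terms into the perfect square $\bigl(\mu+\tfrac12\overline{h}^1(Y_2,Y_2)\bigr)^2$, and note that this square equals $\tfrac14 h^1(X_2,X_2)^2$ again by the driving identity. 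What is left is exactly $4h^1(X_1,X_2)^2+\tfrac54 h^1(X_2,X_2)^2=g^{\bot}_{\mathfrak{u}}(\xi_2,\xi_2)$.

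I expect the only real obstacle to be conceptual rather than computational: recognizing that the constants $1,-\tfrac12,4,\tfrac54$ in the definition of $g^{\bot}$ are tuned precisely so that the cross term $\mu\,\overline{h}^1(Y_2,Y_2)$ generated by the change of frame is absorbed by completing the square. Once that is seen, everything reduces to the two inputs from Lemma \ref{lema-cambio de referencia tangente}, the transition identity for $\mu$ and the invariance of $4(\cdot)^2+(\cdot)^2$; the sign $\epsilon=\pm1$ is irrelevant because $\overline{h}^1(Y_1,Y_2)$ enters only through its square.
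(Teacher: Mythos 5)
Your proposal is correct and follows essentially the same route as the paper's proof: both evaluate $g^{\bot}_{\mathfrak{v}}$ on the common frame $\{\xi_1,\xi_2\}$ using the transition $\xi_1=\overline{\xi}_1$, $\xi_2=\overline{\xi}_2-\mu\,\overline{\xi}_1$ from Lemma \ref{lema-cambio de referencia tangente}, complete the square in the $(\xi_2,\xi_2)$ entry, and finish with the invariance of $4(\cdot)^2+(\cdot)^2$. Your signs are in fact the correct ones: the paper's printed computation substitutes $\xi_2=h^1(Y_1,Y_1)\overline{\xi}_1+\overline{\xi}_2$ (a slip, since Lemma \ref{lema-cambio de referencia tangente} gives the coefficient $-h^1(Y_1,Y_1)$), and its intermediate equalities only hold once this sign is corrected exactly as in your argument, via the identity $\mu+\tfrac12\overline{h}^1(Y_2,Y_2)=\tfrac12 h^1(X_2,X_2)$.
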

\begin{proof}It is enough to show that equality occurs on  the frame $\{\xi_1,\xi_2\}$.
We have $g_{\mathfrak{v}}^{\bot}(\xi_1,\xi_1) =g_{\mathfrak{v}}^{\bot}(\overline \xi_1,\overline \xi_1)=1$, but also
\begin{align*}
g_{\mathfrak{v}}^{\bot}(\xi_1,\xi_2) &=g_{\mathfrak{v}}^{\bot}(\overline \xi_1,h^1(Y_1,Y_1) \overline \xi_1+ \overline \xi_2)\\
&=h^1(Y_1,Y_1)-\frac 1 2 \overline h^1(Y_2,Y_2)
=-\frac 1 2 h^1(X_2,X_2),
\end{align*}
and finally,
\begin{align*}
g_{\mathfrak{v}}^{\bot}(\xi_2,\xi_2) &=g_{\mathfrak{v}}^{\bot}(h^1(Y_1,Y_1) \overline \xi_1+ \overline \xi_2,h^1(Y_1,Y_1) \overline \xi_1+ \overline \xi_2)\\
&=h^1(Y_1,Y_1)^2+ 2 h^1(Y_1,Y_1)g_{\mathfrak v}^{\bot}(\overline \xi_1, \overline \xi_2)+ g_{\mathfrak v}^{\bot}(\overline \xi_2, \overline \xi_2)\\
&=h^1(Y_1,Y_1)^2-  h^1(Y_1,Y_1) \overline h^1(Y_2,Y_2)+4 \overline h^1(Y_1,Y_2)^2+ \frac 5 4 \overline h^1(Y_2,Y_2)^2\\
&=(h^1(Y_1,Y_1)- \frac 1 2 \overline h^1(Y_2,Y_2))^2+4\overline h^1(Y_1,Y_2)^2+\overline h^1(Y_2,Y_2)^2\\
&=\frac 14 h^1(X_2,X_2)^2+4h^1(X_1,X2)^2+h^1(X_2,X_2)^2.
\end{align*}
\end{proof}

By lemma \ref{gbot},  $g_{\mathfrak u}^{\bot}$ is independent of the choice of the tangent frame $\mathfrak u$, we denote it by $g_{\xi}^{\bot}.$

%\medskip

\section{Asymptotic directions, affine binormals and inflections}
In this section, we introduce the concepts of asymptotic directions, affine binormals and inflections. These concepts are well known in the case of a surface immersed in Euclidean space (see for instance \cite{MRR,NR}). We show how to adapt all these definitions to the context of the affine differential geometry.

\medskip
Let $M\subset\R^4$ be an immersed surface with a transversal plane bundle $\sigma$. We denote by $\sigma^*$ the conormal, that is, the dual vector bundle of $\sigma$. For any $p\in M$ and for any conormal vector $\lambda\in\sigma^*_p$, we define the second fundamental form along $\lambda$ as:
$$
h_\lambda(X,Y)=\lambda(h(X,Y)), \ \forall X,Y\in T_pM.
$$

\begin{defn} We say that a non zero $\lambda\in\sigma^*_p$ is an \emph{affine binormal} at $p$ if $h_\lambda$ is degenerate, that is, if there is a non zero tangent vector $X\in T_pM$ such that
$$
h_\lambda(X,Y)=0,\ \forall Y\in T_pM.
$$
Moreover, in such a case, we say that $X$ is an \textit{asymptotic direction} at $p$ associated with the affine binormal $\lambda$.
\end{defn}

The concepts of asymptotic and affine binormals are related to the so-called generalized eigenvalue problem.
Let $A$, $B$ be two $n\times n$ matrices. A pair $(p, q)\in \R^{2}- \{0\}$  is a \textit{generalized eigenvalue} of $(A,B)$ if
\[\det(pA + qB) = 0.\]
Analogously,   {$x\in \R^{n}-\{0\}$} is a \textit{generalized eigenvector} associated with the generalized eigenvalue $(p,q)$ if \[(pA + qB)x = 0.\]

In our case, given a point $p\in M$ we fix $\mathfrak{u}=\{X_1,X_2\}$ any tangent frame of $T_pM$, $\{\xi_1,\xi_2\}$ any transversal frame of $\sigma_p$ and $\{\lambda_1,\lambda_2\}$ the corresponding dual frame of $\sigma^*_p$. We denote by $A=(h^1(X_i,X_j))$ and $B=(h^2(X_i,X_j))$ the coefficient matrices of the second fundamental forms $h^1,h^2$ respectively. The proof of the following lemma is straightforward from the definitions.

\begin{lem}\label{eigenvalue} With the above notation, $X=u_{1}X_{1}+u_{2}X_{2}\in T_pM$ is an asymptotic direction associated with the affine binormal $\lambda=r \lambda_{1}+ s \lambda_{2} \in \sigma^*_p$ if and only if $u=(u_{1},u_{2})$ is a generalized eigenvector of $(A,B)$ associated with the generalized eigenvalue $(r,s)$.
\end{lem}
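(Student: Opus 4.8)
The plan is to translate both sides of the claimed equivalence into one and the same linear condition on $u$, namely $(rA+sB)u=0$, using the duality between $\{\xi_1,\xi_2\}$ and $\{\lambda_1,\lambda_2\}$.

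First I would unwind the definition of the second fundamental form along $\lambda$. Since $h(X,Y)=h^1(X,Y)\xi_1+h^2(X,Y)\xi_2$ by \eqref{decom1} and $\lambda_i(\xi_j)=\delta_{ij}$, for $\lambda=r\lambda_1+s\lambda_2$ one gets
\[
h_\lambda(X,Y)=\lambda(h(X,Y))=r\,h^1(X,Y)+s\,h^2(X,Y).
\]
Next, because $h_\lambda$ is bilinear, the condition $h_\lambda(X,Y)=0$ for all $Y\in T_pM$ is equivalent to the two equations $h_\lambda(X,X_j)=0$ for $j=1,2$. Writing $X=u_1X_1+u_2X_2$ and using that $A$ and $B$ are symmetric matrices, I would compute
\[
h_\lambda(X,X_j)=\sum_{i}u_i\bigl(r\,h^1(X_i,X_j)+s\,h^2(X_i,X_j)\bigr)=\sum_i (rA+sB)_{ij}u_i=\bigl((rA+sB)u\bigr)_j.
\]
Hence $h_\lambda(X,\cdot)$ vanishes identically on $T_pM$ if and only if $(rA+sB)u=0$.

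Finally I would match the non-degeneracy conventions on the two sides: $X\neq0$ is equivalent to $u\neq0$, and $\lambda\neq0$ is equivalent to $(r,s)\neq(0,0)$. Thus the statement that $\lambda$ is an affine binormal with asymptotic direction $X$ — that is, $(r,s)\neq(0,0)$ together with the existence of a non-zero vector $u$ in the kernel of $rA+sB$ — is precisely the statement that $u$ is a generalized eigenvector of $(A,B)$ for the generalized eigenvalue $(r,s)$. Since every step above is a reversible identity, both directions of the equivalence follow at once. There is no genuine obstacle here: the only point requiring care is the bookkeeping of the dual-basis pairing $\lambda_i(\xi_j)=\delta_{ij}$ and the use of symmetry of $A,B$ to identify $\sum_i(rA+sB)_{ij}u_i$ with the $j$-th component of $(rA+sB)u$.
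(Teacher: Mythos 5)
Your proof is correct and is exactly the argument the paper has in mind: the paper itself gives no written proof, stating only that the lemma ``is straightforward from the definitions,'' and your unwinding of $h_\lambda=r\,h^1+s\,h^2$ via the dual pairing, reduction to the basis equations $(rA+sB)u=0$, and matching of the non-vanishing conditions (noting that a nontrivial kernel forces $\det(rA+sB)=0$) is precisely that straightforward verification.
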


It follows from lemma \ref{eigenvalue} that the affine binormals are determined by the solutions of the quadratic equation $\det(rA+sB)=0$, so we can have either 2, 1 or 0 affine binormal directions. When $M$ is locally strictly convex, we always have at least one affine binormal.

\begin{cor} Let $M\subset\R^4$ be a locally strictly convex surface with a transversal bundle $\sigma$. At any point $p\in M$, either:
\begin{enumerate}
\item there exist exactly two affine binormal directions and two asymptotic directions (one for each binormal), or
\item there exists exactly one affine binormal direction and any tangent direction is asymptotic.
\end{enumerate}
\end{cor}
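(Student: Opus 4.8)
The plan is to translate the statement entirely into the language of the generalized eigenvalue problem set up just before Lemma~\ref{eigenvalue}, and then to exploit local strict convexity through simultaneous diagonalization of the pencil $\{rA+sB\}$.

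First I would recall that, by Lemma~\ref{eigenvalue}, the affine binormals at $p$ correspond to the solutions $(r,s)\neq 0$ (up to a common scalar) of the homogeneous quadratic equation
\[
\det(rA+sB)=r^2\det A+rs(ag+ce-2bf)+s^2\det B=0,
\]
while the asymptotic directions associated with a binormal $(r,s)$ are exactly the nonzero vectors of $\ker(rA+sB)$. Local strict convexity means that some combination $h_{r_0,s_0}=r_0h^1+s_0h^2$ is positive definite; equivalently the symmetric matrix $P=r_0A+s_0B$ is positive definite, so $\{rA+sB\}$ is a \emph{definite} pencil.

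Second, I would observe that both the zero set of $\det(rA+sB)$ and the dimensions of the kernels $\ker(rA+sB)$ are invariant under congruence $A\mapsto Q^{t}AQ$, $B\mapsto Q^{t}BQ$ with $Q$ invertible, since $\det(rQ^{t}AQ+sQ^{t}BQ)=(\det Q)^2\det(rA+sB)$ and $Q$ maps kernels isomorphically; concretely, such a $Q$ is the effect of a change of tangent frame. Because $P$ is positive definite, the definite pencil can be simultaneously diagonalized: there is an invertible $Q$ with $Q^{t}AQ=\operatorname{diag}(a_1,a_2)$ and $Q^{t}BQ=\operatorname{diag}(e_1,e_2)$, normalized so that $r_0(Q^{t}AQ)+s_0(Q^{t}BQ)=I$, i.e. $r_0a_i+s_0e_i=1$ for $i=1,2$. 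Replacing $A,B$ by these diagonal matrices, the quadratic factors as a product of two real linear forms,
\[
\det(rA+sB)=(ra_1+se_1)(ra_2+se_2),
\]
and the normalization forces $(a_i,e_i)\neq(0,0)$, so neither factor is identically zero. This already reproves that at least one binormal exists and, more strongly, that the discriminant is nonnegative, ruling out the case of zero binormals.

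Finally I would carry out the dichotomy by comparing the two projective roots $(r_1:s_1)$ and $(r_2:s_2)$ of the factors $ra_1+se_1$ and $ra_2+se_2$. If they are distinct we are in case (1): at the root of the first factor the matrix $rA+sB=\operatorname{diag}(0,\,ra_2+se_2)$ has rank $1$ (the second entry is nonzero precisely because the roots differ), so there is a single asymptotic direction, and symmetrically for the second root. If the two roots coincide we are in case (2): at the common value $(r,s)$ both linear forms vanish, whence $rA+sB=\operatorname{diag}(0,0)=0$ has rank $0$, so $\ker(rA+sB)=T_pM$ and every tangent direction is asymptotic. The main obstacle is the middle step -- justifying the simultaneous diagonalization of a definite pencil and using the normalization $r_0a_i+s_0e_i=1$ to exclude degenerate (identically vanishing) linear factors -- since this is exactly where local strict convexity enters and is what forces the a priori trichotomy $2/1/0$ to collapse into the stated dichotomy.
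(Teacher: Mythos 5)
Your proof is correct, and it reaches the stated dichotomy through a different key lemma than the paper uses. The paper's proof is also a congruence-normalization argument, but its normal form comes from Theorem \ref{vetores-unicos}: choosing a metric field $\xi$ and a $g_\xi$-orthonormal frame, the adapted transversal frame makes $B=(h^2(X_i,X_j))$ the identity and $h^1(X_1,X_1)=0$, so the binormal equation collapses to $s^2+crs-b^2r^2=0$ with $b=h^1(X_1,X_2)$, $c=h^1(X_2,X_2)$, and the explicit roots $s=\frac{-c\pm\sqrt{c^2+4b^2}}{2}$ exhibit the dichotomy as $(b,c)\neq(0,0)$ versus $(b,c)=(0,0)$. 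You instead invoke the classical simultaneous diagonalization of a definite pencil of symmetric forms and read the dichotomy off the factorization $\det(rA+sB)=(ra_1+se_1)(ra_2+se_2)$, with the normalization $r_0a_i+s_0e_i=1$ ruling out identically vanishing factors; the two dichotomies match, since the paper's case $(b,c)=(0,0)$ (its characterization of inflections) is exactly your case of coincident roots. What your route buys is self-containment and generality: you never need the metric $g_\xi$, the adapted frame, or Theorem \ref{vetores-unicos} at all, only the fact that strict convexity places a positive definite form in the pencil (which for an arbitrary bundle $\sigma$ follows from the remarks after the definition of the metric, or directly from the non-degenerate support hyperplane via the height functions of Proposition \ref{height-function}), and it makes transparent that the corollary is really a statement about definite pencils of symmetric bilinear forms. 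What the paper's route buys is explicit formulas in the invariants $b$ and $c$ that are reused later: the inflection condition $b=c=0$ and the nonvanishing of $4b^2+c^2$ are precisely what drive the uniqueness arguments in Theorems \ref{teo-normalantisymmafim} and \ref{teo-normalsymmafim}, so the paper's computation does double duty. Two small points you should spell out if you write this up fully: the simultaneous diagonalization step also covers the degenerate case where $A$ and $B$ are linearly dependent (there it is trivial, since diagonalizing one matrix diagonalizes the whole pencil), and the count of binormals is exact because the factored determinant is a nonzero quadratic form, hence has at most two projective roots.
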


\begin{proof}
We choose any metric field $\xi$ on $M$ and consider $\mathfrak{u}=\{X_1,X_2\}$ an orthonormal tangent frame and $\{\xi_1,\xi_2\}$ the associated transversal frame given by theorem \ref{vetores-unicos}. The coefficient matrices of the second fundamental forms are
\[
A=\left(
\begin{array}{cc}
0  & b    \\
b  & c
\end{array}
\right),
\quad
B=\left(
\begin{array}{cc}
1  & 0    \\
0  & 1
\end{array}
\right),
\]
where $b=h^1(X_1,X_2)$ and $c=h^1(X_2,X_2)$. By lemma \ref{eigenvalue}, the asymptotic and affine binormal directions are given in terms of the solutions of the homogeneous linear system:
\[
\left(
\begin{array}{cc}
s  & rb    \\
rb  & rc+s
\end{array}
\right)
\left(
\begin{array}{c}
u_1\\u_2\end{array}
\right)=
\left(
\begin{array}{c}
0\\0\end{array}
\right).
\]
The affine binormal directions are given by the roots of the determinant $s^2+crs-b^2r^2=0$. Since $(r,s)\ne(0,0)$, we can assume $r\ne0$ and normalize to $r=1$, so,
$$s=\frac{-c\pm\sqrt{c^2+4b^2}}2.$$
If $(b,c)\ne(0,0)$, we have two distinct solutions and one asymptotic direction $(u_1,u_2)$ for each one of them. Otherwise, if $(b,c)=(0,0)$, then $s=0$ and all the directions $(u_1,u_2)$ are asymptotic.
\end{proof}

\begin{defn} We say that a point $p\in M$ is an \emph{inflection} if all the tangent directions at $p$ are asymptotic, that is, if there is a non zero $\lambda\in\sigma^*$ such that $h_\lambda=0$.
\end{defn}

With the notation of lemma \ref{eigenvalue}, $p$ is an inflection if and only if the matrices $A,B$ are collinear. In the case that $M$ is locally strictly convex, we fix a metric field $\xi$ and take an orthonormal tangent frame $\mathfrak{u}=\{X_1,X_2\}$ and a transversal frame $\{\xi_1,\xi_2\}$ as in theorem \ref{vetores-unicos}. Then $p$ is an inflection if and only if $h^1(X_1,X_2)=h^1(X_2,X_2)=0$.

\medskip

We can also use the lemma \ref{eigenvalue} in order to obtain the differential equation of the asymptotic lines of a surface. By definition, an asymptotic line is an integral curve of the field of asymptotic directions, that is, it is a curve whose tangent at any point is asymptotic.

\begin{thm}
With the notation of lemma \ref{eigenvalue}, the differential equation for the asymptotic lines of $M$ is:
\[ \left|
  \begin{array}{ccc}
    dv^{2} & -dv du & du^{2} \\
    a & b & c \\
    e & f & g \\
  \end{array}
\right|=0,\]
where $A=\left(
           \begin{array}{cc}
             a & b \\
             b & c \\
           \end{array}
         \right)
$ and $B=\left(
         \begin{array}{cc}
           e & f \\
           f & g \\
         \end{array}
       \right).
$
\end{thm}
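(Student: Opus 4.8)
The plan is to reduce the statement to Lemma \ref{eigenvalue} by working in a coordinate frame. First I would fix local coordinates $(u,v)$ on $M$ and take $\mathfrak{u}=\{X_1,X_2\}$ to be the coordinate frame $X_1=\partial/\partial u$, $X_2=\partial/\partial v$. Along a curve $t\mapsto(u(t),v(t))$ the tangent vector is $du\,X_1+dv\,X_2$, so an asymptotic line is precisely a curve whose direction $(u_1,u_2)=(du,dv)$ is asymptotic at every point. Thus the differential equation we want is exactly the condition on the ratio $du:dv$ for $X=du\,X_1+dv\,X_2$ to be an asymptotic direction.

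Next I would apply Lemma \ref{eigenvalue}: the direction $X=u_1X_1+u_2X_2$ is asymptotic if and only if there is a nonzero pair $(r,s)$ with $(rA+sB)u=0$. Writing out the two scalar equations and regarding them as a homogeneous linear system in the \emph{unknowns} $(r,s)$ gives
\[
r\,(au_1+bu_2)+s\,(eu_1+fu_2)=0,\qquad r\,(bu_1+cu_2)+s\,(fu_1+gu_2)=0.
\]
A nonzero binormal $(r,s)$ exists if and only if the $2\times2$ determinant of the coefficients vanishes. Expanding this determinant, the mixed $bf\,u_1u_2$ terms cancel and one is left with the quadratic $(af-be)\,u_1^2+(ag-ce)\,u_1u_2+(bg-cf)\,u_2^2=0$, which after the substitution $(u_1,u_2)=(du,dv)$ becomes $(af-be)\,du^2+(ag-ce)\,du\,dv+(bg-cf)\,dv^2=0$.

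Finally I would identify this quadratic with the stated $3\times3$ determinant by Laplace expansion along its first row: the minors of the $(1,1)$, $(1,2)$ and $(1,3)$ entries are $bg-cf$, $ag-ce$ and $af-be$, and the cofactor sign on the middle term combines with the deliberately inserted minus in the entry $-dv\,du$ to produce $+\,du\,dv\,(ag-ce)$; the three contributions then reproduce exactly the quadratic above. The only real point to watch is this sign bookkeeping in the expansion, everything else being routine bilinear algebra. As an alternative that avoids even the eigenvalue reformulation, one can observe that $X$ is asymptotic precisely when the vectors $h(X,X_1)$ and $h(X,X_2)$ are linearly dependent in the $2$-plane $\sigma_p$, and writing this dependence as the vanishing of the corresponding $2\times2$ determinant in the frame $\{\xi_1,\xi_2\}$ leads to the same equation.
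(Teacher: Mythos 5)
Your proof is correct and takes essentially the same approach as the paper: the paper's one-line proof is exactly ``eliminate $(r,s)$ in the linear system $(rA+sB)u=0$ with $u=(du,dv)$,'' which you carry out explicitly by viewing the system as homogeneous in $(r,s)$, setting the $2\times2$ coefficient determinant to zero, and matching the resulting quadratic $(af-be)\,du^2+(ag-ce)\,du\,dv+(bg-cf)\,dv^2$ with the Laplace expansion of the stated $3\times3$ determinant. Your sign bookkeeping and the cancellation of the $bf\,u_1u_2$ terms are both accurate, so the argument is complete.
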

\begin{proof}
We just eliminate $(r,s)$ in the linear system $(rA+sB)u=0$, where $u=(du,dv)$.
\end{proof}

Another important fact is that we can characterize the asymptotic directions and affine binormals in terms of the singularities of the height functions. Given the direct sum $\R^4=T_pM\oplus\sigma_p$, we denote by $p_1:\R^4\to T_pM$ and $p_2:\R^4\to\sigma_p$ the two associated linear projections. Then, for each $\lambda\in\sigma_p^*$, we define the height function $H_\lambda:M\to\R$ by
$$
H_\lambda(x)=\lambda(p_2(x)).
$$

\begin{prop}\label{height-function} Let $\lambda\in\sigma_p^*$ be a non zero conormal vector of $M$, then:
 \begin{enumerate}
\item $H_\lambda$ has always a singularity at $p$;
\item $\lambda$ is an affine binormal if and only if $H_\lambda$ has a degenerate singularity at $p$;
\item $X\in T_pM$ is an asymptotic direction associated with $\lambda$ if and only if $X$ belongs to the kernel of the Hessian of $H_\lambda$ at $p$;
\item $p$ is an inflection if and only if there exists a non zero $\lambda\in \sigma_p^*$ such that $H_\lambda$ has a corank 2 singularity at $p$.
\end{enumerate}
\end{prop}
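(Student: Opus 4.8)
The plan is to compute the $1$-jet and $2$-jet of $H_\lambda$ at $p$ directly from a local parametrization and to recognize the Hessian as the second fundamental form $h_\lambda$; every item then follows by reading off the corresponding definition. First I would fix a tangent frame $\mathfrak u=\{X_1,X_2\}$ of $T_pM$ and a local parametrization $x\colon U\to M$, with $U\subset\R^2$ a neighborhood of $0$, $x(0)=p$ and $\partial x/\partial u_i(0)=X_i$. The crucial observation is that $\ell:=\lambda\circ p_2\colon\R^4\to\R$ is a \emph{fixed} linear functional, so that $H_\lambda=\ell\circ x$ on $U$ and differentiation commutes with $\ell$. Hence
\[
\frac{\partial H_\lambda}{\partial u_i}=\ell\Big(\frac{\partial x}{\partial u_i}\Big),\qquad
\frac{\partial^2 H_\lambda}{\partial u_i\partial u_j}=\ell\Big(\frac{\partial^2 x}{\partial u_i\partial u_j}\Big).
\]

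For item (1), I would evaluate the first derivatives at $0$: there $\partial x/\partial u_i(0)=X_i\in T_pM$, and since $p_2$ is the projection onto $\sigma_p$ along $T_pM$ we get $p_2(X_i)=0$, hence $\ell(X_i)=0$. Thus $dH_\lambda(p)=0$ and $p$ is always a singularity. For the Hessian I would use that $D$ is the flat connection of $\R^4$, so $\partial^2 x/\partial u_i\partial u_j(0)=D_{X_i}X_j$; applying the decomposition \eqref{decom1} together with $p_2$ kills the tangential term $\nabla_{X_i}X_j$ and fixes the $\sigma_p$-part, leaving $p_2(D_{X_i}X_j)=h(X_i,X_j)$. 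Therefore
\[
\frac{\partial^2 H_\lambda}{\partial u_i\partial u_j}(0)=\lambda\big(h(X_i,X_j)\big)=h_\lambda(X_i,X_j),
\]
so the Hessian of $H_\lambda$ at the critical point $p$, expressed in the frame $\mathfrak u$, coincides with the matrix of $h_\lambda$.

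With this identity the remaining items are immediate. For (2), a singularity is degenerate exactly when its Hessian is degenerate, which by the displayed identity means $h_\lambda$ is degenerate, i.e.\ $\lambda$ is an affine binormal. For (3), the kernel of the Hessian is $\{X\in T_pM:\ h_\lambda(X,Y)=0\ \forall Y\}$, which is precisely the set of asymptotic directions associated with $\lambda$. For (4), a corank $2$ singularity of a function of two variables is one whose Hessian has rank $0$, i.e.\ $h_\lambda=0$; the existence of a non zero $\lambda$ with this property is exactly the definition of $p$ being an inflection.

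I do not expect a genuine obstacle here: the entire content is concentrated in the single computation identifying the Hessian with $h_\lambda$. The only points requiring a little care are the justification that $\partial^2 x/\partial u_i\partial u_j(0)=D_{X_i}X_j$ (immediate since $D$ is the flat connection and the $X_i$ are the coordinate fields) and the remark that the Hessian at a critical point is a well-defined symmetric bilinear form whose kernel is intrinsic, so that items (3) and (4) are independent of the choice of frame $\mathfrak u$. Both are standard once one notes that $p$ is a critical point of $H_\lambda$.
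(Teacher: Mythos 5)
Your proof is correct and takes essentially the same approach as the paper: both establish $d(H_\lambda)_p(X)=\lambda(p_2(X))=0$ and the key identity $d^2(H_\lambda)_p(X,Y)=\lambda(p_2(D_XY))=h_\lambda(X,Y)$, after which items (2)--(4) are immediate from the definitions of affine binormal, asymptotic direction and inflection. The only difference is presentational: you carry out the jet computation in a local parametrization with coordinate fields, whereas the paper states the same computation intrinsically.
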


\begin{proof}
The differential of $H_\lambda$ at $p$ is always zero and we have (1):
$$
d(H_\lambda)_p(X)=\lambda(p_2(X))=0,\ \forall X\in T_pM.
$$
But the Hessian of $H_\lambda$ at $p$ is precisely the second fundamental form $h_\lambda$:
$$
d^2(H_\lambda)_p(X,Y)=\lambda(p_2(D_XY))=\lambda(h(X,Y))=h_\lambda(X,Y),\ \forall X,Y\in T_pM.
$$
Then, (2), (3) and (4)  follow directly from the definitions of affine binormal, asymptotic direction and inflection.
\end{proof}

The results of proposition \ref{height-function} can be easily restated in terms of contacts with hyperplanes.

\begin{defn} We say that $\pi$ is an \emph{osculating hyperplane} of $M$ at $p$ if it is tangent to $M$ at $p$ and it has a degenerate contact with $M$ at $p$. If $H:\R^4\to\R$ is any linear function such that $\pi$ is given by the equation $H(x-p)=0$, then we say that $X\in T_pM$ is a \emph{contact direction} if it belongs to the kernel of the Hessian of $H|_M$.
\end{defn}

\begin{cor}
A tangent vector $X\in T_pM$ is an asymptotic direction if and only if it is a contact direction of some osculating hyperplane. In particular, the asymptotic directions (and hence the inflections) of $M$ are affine invariant, that is, they do not depend on the choice of the transversal plane bundle $\sigma$.
\end{cor}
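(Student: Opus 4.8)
The plan is to deduce the corollary entirely from Proposition \ref{height-function} by setting up a dictionary between non-zero conormal vectors at $p$ and tangent hyperplanes of $M$ at $p$, under which the height functions $H_\lambda$ become, up to an additive constant, the defining functions of the corresponding hyperplanes restricted to $M$. Once this dictionary is in place, the three relevant clauses of the proposition transcribe directly into statements about osculating hyperplanes and contact directions.

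First I would fix the correspondence. To a non-zero $\lambda\in\sigma_p^*$ I attach the hyperplane $\pi_\lambda=\{x\in\R^4:\lambda(p_2(x-p))=0\}$. Because $p_2$ annihilates $T_pM$ in the splitting $\R^4=T_pM\oplus\sigma_p$, the hyperplane $\pi_\lambda$ contains $p+T_pM$ and is therefore tangent to $M$ at $p$; moreover the linear function $H(x-p)=\lambda(p_2(x-p))$ defining it restricts on $M$ to $H_\lambda|_M$ minus the constant $\lambda(p_2(p))$, so $H|_M$ and $H_\lambda|_M$ share the same singularity at $p$ and the same Hessian there. Conversely, every tangent hyperplane at $p$ is cut out by a linear functional that vanishes on $T_pM$, hence factors as $\lambda\circ p_2$ for a $\lambda\in\sigma_p^*$ that is unique up to a non-zero scalar; thus $\lambda\mapsto\pi_\lambda$ is a bijection between lines of $\sigma_p^*$ and tangent hyperplanes of $M$ at $p$.

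Next I would translate the relevant clauses of Proposition \ref{height-function} across this dictionary. Degeneracy of the contact of $\pi_\lambda$ with $M$ at $p$ is, by the previous paragraph, degeneracy of the singularity of $H_\lambda|_M$ at $p$, which by part (2) means exactly that $\lambda$ is an affine binormal; hence osculating hyperplanes correspond precisely to affine binormals. A contact direction of $\pi_\lambda$ is by definition a vector in the kernel of the Hessian of $H|_M$, which by part (3) is exactly an asymptotic direction associated with $\lambda$. Combining these, $X$ is an asymptotic direction (necessarily associated with some affine binormal) if and only if it is a contact direction of the corresponding osculating hyperplane, which is the first assertion.

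Finally I would extract the invariance. The notions of osculating hyperplane and contact direction are phrased purely through $M$ and its contact with hyperplanes, with no reference to the transversal bundle $\sigma$; hence the characterization just obtained shows that the asymptotic directions, and therefore the inflections (the points where every tangent direction is asymptotic), do not depend on $\sigma$. Affine invariance then follows because a unimodular affine transformation carries hyperplanes to hyperplanes and tangent hyperplanes to tangent hyperplanes, preserves degeneracy of contact (being a diffeomorphism), and carries Hessian kernels to Hessian kernels through its differential. I expect the only genuinely delicate points -- the main obstacle -- to be the bookkeeping that makes the dictionary rigorous: verifying that an arbitrary tangent hyperplane really does factor through $p_2$, so that $\lambda\mapsto\pi_\lambda$ is onto, and checking that a contact direction is intrinsic, that is, independent of the linear function chosen to define the hyperplane. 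The latter is immediate, since two such functions differ by a non-zero scalar and hence have proportional Hessians with the same kernel; everything beyond this is a direct transcription of Proposition \ref{height-function}.
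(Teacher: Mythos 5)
Your proposal is correct and follows essentially the same route as the paper: both directions are obtained by passing between a conormal vector $\lambda\in\sigma_p^*$ and the tangent hyperplane it determines (respectively, restricting a defining linear functional of a tangent hyperplane to $\sigma_p$), and then invoking parts (2) and (3) of Proposition \ref{height-function}. Your version merely packages this as an explicit bijection between lines in $\sigma_p^*$ and tangent hyperplanes, and is slightly more careful than the paper about the additive constant and the independence of the contact direction from the chosen defining function, which are harmless refinements rather than a different argument.
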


\begin{proof} If $X$ is an asymptotic direction, then there exists an affine binormal $\lambda\in\sigma^*_p$ associated with $X$. We define $\pi$ as the hyperplane passing through $p$ and parallel to $T_pM\oplus\ker\lambda$. We can take $H:\R^4\to\R$ given by $H(x)=\lambda(p_2(x))$ so that $H(x-p)=0$ is a defining equation of $\pi$ and $H|_M=H_\lambda$. By proposition \ref{height-function}, $\pi$ is an osculating hyperplane and $X$ is a contact direction.

Conversely, assume that $\pi$ is an osculating hyperplane and $X$ is a contact direction. Let $H:\R^4\to\R$ be any linear function such that $H(x-p)=0$ is a defining equation of $\pi$. We take now $\lambda=H|_{\sigma_p}\in\sigma_p^*$, then
$$
H|_M(x)=H(x)=H(p_1(x)+p_2(x))=H(p_2(x))=\lambda(p_2(x))=H_\lambda(x),
$$
for all $x\in M$. Again by proposition \ref{height-function}, $\lambda$ is an affine binormal with associated asymptotic direction $X$.
\end{proof}

\begin{rem} If $M$ is locally strictly convex and $\xi$ is a metric field, then we can use the transversal metric $g_\xi^\bot$ in order to define binormal directions also in $\sigma$ instead of $\sigma^*$. In fact, for each $\nu\in\sigma_p$ we have a well defined second fundamental form along $\nu$:
$$
h_\nu(X,Y)=g^\bot_\xi(h(X,Y),\nu),\ \forall X,Y\in T_pM.
$$
We say that $\nu$ is binormal if $h_\nu$ is degenerate.
\end{rem}

\begin{rem} It is not difficult to see that if $M$ is non-degenerate in the sense of Nomizu-Vranken \cite{N-V}, then the asymptotic directions of $M$ at $p$ are exactly the null directions of the Burstin-Mayer affine metric (which is indefinite in the case that $M$ is locally strictly convex). This gives another proof of the fact that the asymptotic directions are affine invariant.
\end{rem}

We finish this section with the next result, which gives the relation between the transversal frames of theorem \ref{vetores-unicos} in the case we change the transversal plane. This lemma will be very useful in the next section.

\color{black}

\begin{lem}\label{remark-cambiodebase}   {Let $M \subset \R^4$ be a locally strictly convex surface without inflections. Let $\xi$ be a metric field and $\mathfrak{u}=\{X_1,X_2\}$ a local orthonormal  tangent frame. Let  $\sigma$ and $\overline{\sigma}$ be two transversal plane bundles. We denote by $\{\xi_1,\xi_2\}$ and $\{\overline{\xi}_1,\overline{\xi}_2\}$ the transversal frames obtained from theorem \ref{vetores-unicos} for  $\sigma$ and $\overline{\sigma}$, respectively.} Then there are $Z_1$ and $Z_2$ tangent vector fields on $M$ such that
\begin{align*}
  \overline{\xi}_1 &= \xi_1+ Z_1, \\
  \overline{\xi}_2 &= \xi_2+ Z_2.
\end{align*}
\end{lem}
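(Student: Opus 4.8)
The plan is to show that each difference $\overline{\xi}_i-\xi_i$ lies in $T_pM$ by reading off its class in the quotient $\R^4/T_pM$ directly from the normalization conditions of theorem \ref{vetores-unicos}, which are identical for the two bundles. The guiding observation is that the only ambient data entering the construction, namely the class $[\xi]$ and the flat derivative $D_{X_1}X_1$, do not depend on the chosen transversal plane bundle, while the conditions of theorem \ref{vetores-unicos} pin both adapted frames to exactly these data.

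First I would treat $\xi_1$ and $\overline{\xi}_1$. Theorem \ref{vetores-unicos}, applied to $\sigma$, gives $-\xi_1\in[\xi]$, and applied to $\overline{\sigma}$ it gives $-\overline{\xi}_1\in[\xi]$, where $[\xi]$ denotes the coset of the metric field in $\R^4/T_pM$. Hence $[\xi_1]=-[\xi]=[\overline{\xi}_1]$, so that $Z_1:=\overline{\xi}_1-\xi_1$ is a tangent vector field and $\overline{\xi}_1=\xi_1+Z_1$, as required. (Note that, unlike in lemma \ref{lema-cambio de referencia tangente}, here $\xi_1$ and $\overline{\xi}_1$ lie in the two \emph{different} transversal planes $\sigma$ and $\overline{\sigma}$, so one cannot conclude equality, only equality of classes.)

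For $\xi_2$ and $\overline{\xi}_2$ I would compute $D_{X_1}X_1$ in each adapted frame. Using \eqref{decom1} together with the conditions $h^1(X_1,X_1)=0$ and $h^2(X_1,X_1)=1$ supplied by theorem \ref{vetores-unicos} for $\sigma$, I obtain
\[
D_{X_1}X_1=\nabla_{X_1}X_1+\xi_2,
\]
with $\nabla_{X_1}X_1\in T_pM$. Repeating the same computation for $\overline{\sigma}$, and using $\overline{h}^1(X_1,X_1)=0$ and $\overline{h}^2(X_1,X_1)=1$, I get $D_{X_1}X_1=\overline{\nabla}_{X_1}X_1+\overline{\xi}_2$. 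Since $D_{X_1}X_1$ is one and the same ambient vector in both expressions, subtracting yields $\overline{\xi}_2-\xi_2=\nabla_{X_1}X_1-\overline{\nabla}_{X_1}X_1\in T_pM$, so $Z_2:=\overline{\xi}_2-\xi_2$ is the desired tangent field and $\overline{\xi}_2=\xi_2+Z_2$.

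There is essentially no analytic obstacle: the statement is a bookkeeping consequence of the normalizations of theorem \ref{vetores-unicos}, and the whole idea is to isolate the transversal-bundle-independent quantities $[\xi]$ and $D_{X_1}X_1$. The hypotheses that $M$ is locally strictly convex and $\xi$ a metric field are what guarantee the smooth existence of the orthonormal frame $\mathfrak u$ and of the adapted frames $\{\xi_1,\xi_2\}$, $\{\overline{\xi}_1,\overline{\xi}_2\}$ on a common neighborhood, so that $Z_1$ and $Z_2$ are genuine vector fields rather than merely pointwise vectors.
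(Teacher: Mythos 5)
Your proof is correct, but it takes a genuinely different route from the paper's. The paper writes the general relation $\overline{\xi}_1=\phi\xi_1+\psi\xi_2+Z_1$, $\overline{\xi}_2=\rho\xi_1+\beta\xi_2+Z_2$ and determines the coefficients one by one: the conditions $h^1(X_1,X_1)=\overline{h}^1(X_1,X_1)=0$ and $h^2(X_1,X_1)=\overline{h}^2(X_1,X_1)=1$ give $\rho=0$ and $\beta=1$; then $\overline{h}^2(X_1,X_2)=h^2(X_1,X_2)=0$ and $\overline{h}^2(X_2,X_2)=h^2(X_2,X_2)=1$ yield $\psi h^1(X_1,X_2)=\psi h^1(X_2,X_2)=0$, and it is precisely here that the no-inflection hypothesis is invoked to conclude $\psi=0$; finally $[X_1,X_2,\overline{\xi}_1,\overline{\xi}_2]=1$ gives $\phi\beta-\psi\rho=1$, hence $\phi=1$. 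Your argument instead leans on the normalization $-\xi_1\in[\xi]$, which the paper's proof never uses: it gives $[\overline{\xi}_1]=[-\xi]=[\xi_1]$ in $\R^4/T_pM$ at once, and for the second field you recycle the trick of lemma \ref{lema-cambio de referencia tangente}, comparing the single ambient vector $D_{X_1}X_1$ in the two adapted decompositions to obtain $\overline{\xi}_2-\xi_2=\nabla_{X_1}X_1-\overline{\nabla}_{X_1}X_1\in T_pM$. Besides being shorter, your route proves something slightly stronger: it nowhere uses the absence of inflections, so the lemma in fact holds at inflection points as well (and the same two observations, applied with $\sigma=\overline{\sigma}$, give the uniqueness part of theorem \ref{vetores-unicos}, since $T_pM\cap\sigma_p=\{0\}$); the hypothesis in the statement is needed only by the paper's bookkeeping, which discards the condition $-\xi_1\in[\xi]$. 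What the paper's computation buys in exchange is the explicit transition scheme for all four coefficients, which is the pattern it then reuses in lemma \ref{lema-eq-equiafim} and theorems \ref{teo-normalantisymmafim} and \ref{teo-normalsymmafim}.
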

\begin{proof}We suppose that
\[\overline{\xi}_1 =\phi \xi_1 + \psi \xi_2 + Z_1, \hspace*{1.0cm} \overline{\xi}_2 =\rho \xi_1 + \beta \xi_2 + Z_2.\]
By theorem \ref{vetores-unicos} we have $[X_1,X_2,\overline{\xi}_1,\overline{\xi}_2]=1$,   {which implies}  $\phi \beta- \psi \rho=1$. We denote by $\overline{h}^1$ and $\overline{h}^2$ the affine fundamental   {forms of the frame} $\{\overline{\xi}_1,\overline{\xi}_2\}$.   {We} note that
\begin{align*}
  \overline{h}^1(X_1,X_1) &= \phi h^1(X_1,X_1)+ \rho h^2(X_1,X_1), \\
  \overline{h}^2(X_1,X_1) &= \psi h^1(X_1,X_1)+ \beta h^2(X_1,X_1).
\end{align*}
  {Again by  theorem \ref{vetores-unicos},} $\overline{h}^1(X_1,X_1)=0=h^1(X_1,X_1)$ and $\overline{h}^2(X_1,X_1)=1=h^2(X_1,X_1)$, therefore $\rho=0$ {and} $\beta=1$. Since $\beta=1$,    {we have}
\begin{align*}
  \overline{h}^2(X_1,X_2) &= \psi h^1(X_1,X_2)+  h^2(X_1,X_2), \\
  \overline{h}^2(X_2,X_2) &= \psi h^1(X_2,X_2)+  h^2(X_2,X_2),
\end{align*}
and by theorem \ref{vetores-unicos}, $\psi h^1(X_1,X_2)=\psi h^1(X_2,X_2)$=0. Note that   {there are no affine inflections, so} $\psi=0$. Finally, from $\phi \beta- \psi \rho=1$ it follows $\phi=1$.
\end{proof}

\section{The equiaffine transversal plane bundles}

Nomizu and Vrancken in \cite{N-V} had defined   {the concept of} equiaffine plane as   {a} transversal plane bundle $\sigma$ such that the affine connection induced by $\sigma$, $\nabla=\nabla(\sigma)$ satisfies $\nabla \omega_g=0$ where $\omega_g$ is the   {metric volume form for the Burstin-Mayer affine metric $g$:
\[\omega_{g}(X,Y)=\sqrt{|g(X,X)g(Y,Y)-g(X,Y)^2|},\]
where $\{X,Y\}$ is any positively oriented basis of $T_pM$.}
In our   {case}, we consider the same definition,   {but we use}  the metric of   {the} transversal vector field   {$g_\xi$ instead of the Burstin-Mayer affine metric}. This definition is based   {on} the compatibility between  the volume form and   {the} affine connection.

\medskip

Let $M \subset \R^4$ be a locally strictly convex surface   {and} $\xi$ a metric field and $g=g_\xi$ the metric of the transversal field $\xi$.

\begin{defn}
We say a transversal plane bundle $\sigma$ is    {\textit{equiaffine}} if the  connection $\nabla=\nabla(\sigma)$ induced by $\sigma$ satisfies  $\nabla \omega_{g}=0.$
\end{defn}

  {If $\mathfrak{u}=\{X_1,X_2\}$ is a local orthonormal  tangent frame and $\{\xi_1,\xi_2\}$ is the transversal frame given by theorem \ref{vetores-unicos}, then
$\omega_g=\theta$, where $\theta$ is the volume form induced by the determinant:
$$
\theta(X,Y)=[X,Y,\xi_1,\xi_2],\ \forall X,Y\in T_pM.
$$
This is because $\omega_g(X_i,X_j)=\theta(X_i,X_j)$, $\forall i,j$. Moreover, by using $\theta$ instead of $\omega_g$, it is easy to see that
$\sigma$ is equiaffine if and only if
\begin{align}
   B_1 &:= (\nabla g)(X_1,X_1,X_1)+(\nabla g)(X_1,X_2,X_2)=0, \label{eq-planoequiafim1}\\
   B_2 &:= (\nabla g)(X_2,X_1,X_1)+(\nabla g)(X_2,X_2,X_2)=0\label{eq-planoequiafim2}.
\end{align}
}

\begin{lem}\label{lema-eq-equiafim}
  {Let $M \subset \R^4$ be a  locally strictly convex surface and $\xi$ a metric field. If $p\in M$ is not an inflection, then there exists an equiaffine plane bundle $\sigma$ defined on a neighborhood of $p$.}
\end{lem}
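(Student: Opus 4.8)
The plan is to correct an arbitrary transversal bundle by a tangent perturbation chosen to annihilate the functions $B_1,B_2$ of \eqref{eq-planoequiafim1}--\eqref{eq-planoequiafim2}. Since being an inflection is a closed condition, I may fix a neighborhood $U$ of $p$ on which $M$ has no inflections. Fix the metric field $\xi$, a local orthonormal frame $\mathfrak u=\{X_1,X_2\}$ of $g=g_\xi$, and any transversal plane bundle $\sigma$ with associated frame $\{\xi_1,\xi_2\}$ from theorem \ref{vetores-unicos}; let $\nabla$ be the induced connection and $B_1,B_2$ the resulting functions on $U$.

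First I would parametrize the candidate bundles. Given tangent vector fields $Z_1,Z_2$, set $\overline\xi_i=\xi_i+Z_i$ and $\overline\sigma:=\mathrm{span}\{\overline\xi_1,\overline\xi_2\}$. Since $Z_1,Z_2$ are tangent, $[X_1,X_2,\overline\xi_1,\overline\xi_2]=[X_1,X_2,\xi_1,\xi_2]=1$, so $\overline\sigma$ is transversal; and substituting $\xi_i=\overline\xi_i-Z_i$ into \eqref{decom1} shows that the $\overline\sigma$-components of $D_XY$ coincide with the $\sigma$-components, so the normalized fundamental forms do not change, $\overline h^i=h^i$, while
\[
\overline\nabla_XY=\nabla_XY-h^1(X,Y)Z_1-h^2(X,Y)Z_2 .
\]
In particular $\{\overline\xi_1,\overline\xi_2\}$ satisfies all the normalizations of theorem \ref{vetores-unicos} (note $-\overline\xi_1\in[\xi]$ because $Z_1$ is tangent), so by its uniqueness clause it is exactly the canonical frame of $\overline\sigma$, and the criterion \eqref{eq-planoequiafim1}--\eqref{eq-planoequiafim2} applies to $\overline\sigma$ verbatim. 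That every nearby bundle is of this form is the content of lemma \ref{remark-cambiodebase}, which is where the non-inflection hypothesis enters; for the existence claim, however, it suffices to exhibit one such $\overline\sigma$.

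The heart of the argument is to track how $\nabla g$ changes. From the displayed connection formula and $(\nabla_Xg)(Y,Z)=X\big(g(Y,Z)\big)-g(\nabla_XY,Z)-g(Y,\nabla_XZ)$ I would obtain
\[
(\overline\nabla g)(X,Y,Z)=(\nabla g)(X,Y,Z)+\sum_{i=1,2}\big(h^i(X,Y)\,g(Z_i,Z)+h^i(X,Z)\,g(Z_i,Y)\big).
\]
Here I exploit the normalization of theorem \ref{vetores-unicos}: since $g(X_i,X_j)=\delta_{ij}=h^2(X_i,X_j)$, the forms $g$ and $h^2$ coincide. Taking the simplest correction $Z_1=0$ and $Z_2=\mu_1X_1+\mu_2X_2$ makes the $h^1$-terms irrelevant and collapses the $h^2$-terms, yielding the triangular relations
\[
\overline B_1=B_1+2\mu_1,\qquad \overline B_2=B_2+2\mu_2 .
\]
Hence the choice $\mu_1=-\tfrac12 B_1$, $\mu_2=-\tfrac12 B_2$, that is
\[
\overline\xi_1=\xi_1,\qquad \overline\xi_2=\xi_2-\tfrac12\big(B_1X_1+B_2X_2\big),
\]
gives $\overline B_1=\overline B_2=0$, so $\overline\sigma=\mathrm{span}\{\overline\xi_1,\overline\xi_2\}$ is equiaffine; as $B_1,B_2$ are smooth on $U$, this $\overline\sigma$ is a smooth transversal plane bundle defined on a neighborhood of $p$.

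The hard part will not be the final linear algebra but the bookkeeping behind it: verifying that a tangent perturbation of a canonical frame is again the canonical frame of the new bundle (so that $\overline B_1,\overline B_2$ are computed by the same recipe), and then transforming $\nabla g$ without sign or symmetrization slips. The observation $h^2=g$, forced by theorem \ref{vetores-unicos}, is exactly what decouples the system and makes it solvable by inspection; I would double-check it, since it is the pivot of the whole computation.
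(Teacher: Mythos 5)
Your proof is correct and follows essentially the same route as the paper: perturb the canonical frame of an arbitrary transversal bundle by a tangent field added to $\xi_2$, compute how $B_1,B_2$ transform (your formula for $\overline\nabla g$ specializes exactly to the paper's displayed relations, and your solution $Z_1=0$, $Z_2=-\tfrac12(B_1X_1+B_2X_2)$ is the paper's choice $a=b=0$, $c=\overline B_1/2$, $d=\overline B_2/2$ up to relabelling). The only differences are cosmetic refinements: you justify that the perturbed frame is canonical directly from the uniqueness in theorem \ref{vetores-unicos} rather than citing lemma \ref{remark-cambiodebase}, which also lets you observe, correctly, that the non-inflection hypothesis is not actually needed for existence.
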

\begin{proof} Let $\mathfrak{u}=\{X_1,X_2\}$ be   {an} orthonormal tangent frame defined on some neighborhood $U$ of   {$p$}. Let $\overline{\sigma}$ be a transversal plane bundle defined also on $U$  and $\{\overline{\xi}_1,\overline{\xi}_2\}$ the local   {basis of} $\overline{\sigma}$ obtained by theorem \ref{vetores-unicos}.  Now we   {want to construct a new equiaffine plane bundle} $\sigma$  defined on $U$, with local   {basis} $\{\xi_1,\xi_2\}$ obtained also by theorem \ref{vetores-unicos}. By   {lema} \ref{remark-cambiodebase}, we have
\[\xi_1=\overline{\xi}_1- Z_1, \hspace*{1.5cm} \xi_2=\overline{\xi}_2- Z_2,\]
where $Z_1$ and $Z_2$ are tangent vector fields. We denote the connection induced by $\sigma$ (resp. $\overline{\sigma}$) by $\nabla$ (resp. $\overline{\nabla}$).  On the other hand, by a simple calculation we obtain
\begin{align*}
  \overline{B}_1 &= B_1+2 g(Z_2,X_1)+2 h^1(X_1,X_2)g(Z_1,X_2), \\
  \overline{B}_2 &= B_2+2h^1(X_1,X_2)g(Z_1,X_1)+2h^1(X_2,X_2)g(Z_1,X_2)+2g(Z_2,X_2).
\end{align*}
  {Note that} $\sigma$ is    {equiaffine if and only if} $B_1=B_2=0$.    {By} writing $Z_1=a X_1+ b X_2$ and $Z_2=c X_1 + d X_2$,    {this is equivalent to}
\begin{align*}
  \overline{B}_{1} &= 2 c + 2 b h^1(X_1,X_2), \\
  \overline{B}_{2} &= 2 d + 2 a h^1(X_1,X_2)+ 2 b h^1(X_2,X_2).
\end{align*}
The lemma follows since the system above   {has a} solution.   {For instance, set} $a=b=0$, $c=\frac{\overline{B}_1}{2}$ and $d=\frac{\overline{B}_2}{2}$.
\end{proof}

  {We define now the affine normal plane bundles. Our} construction is based on the ideas developed in \cite{N-V}. Because there are many equiaffine plane bundles (lemma \ref{lema-eq-equiafim}), we give conditions to choose some special types among them.

\begin{defn}   {Let $M \subset \R^4$ be a  locally strictly convex surface, $\xi$ a metric field and $\mathfrak{u}=\{X_1,X_2\}$ an orthonormal tangent frame.}
We say that an equiaffine plane bundle $\sigma$ is:
\begin{itemize}
  \item \emph{symmetric}, if
\begin{align*}
     D_1 &=(\nabla g)(X_2,X_1,X_1)-(\nabla g)(X_1,X_2,X_1) = 0,\\
     D_2 &=(\nabla g)(X_1,X_2,X_2)-(\nabla g)(X_2,X_1,X_2) = 0,
\end{align*}
  \item \emph{antisymmetric}, if
\begin{align*}
   C_1 &= (\nabla g)(X_2,X_1,X_1)+(\nabla g)(X_1,X_2,X_1)=0, \\
   C_2 &= (\nabla g)(X_1,X_2,X_2)+(\nabla g)(X_2,X_1,X_2)=0.
\end{align*}
\end{itemize}
\end{defn}

\begin{thm}\label{teo-normalantisymmafim}   {Let $M \subset \R^4$ be a locally strictly convex surface and $\xi$ be a metric field. If $p\in M$ is not an inflection, then there exists a unique antisymmetric equiaffine plane bundle $\sigma$ defined on a neighborhood of $p$.}
\end{thm}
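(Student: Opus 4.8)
The plan is to follow the strategy of lemma \ref{lema-eq-equiafim}, but now imposing simultaneously the two equiaffine equations $B_1=B_2=0$ and the two antisymmetry equations $C_1=C_2=0$. First I fix the orthonormal tangent frame $\mathfrak{u}=\{X_1,X_2\}$ and choose an arbitrary reference transversal plane bundle $\overline{\sigma}$, with associated frame $\{\overline{\xi}_1,\overline{\xi}_2\}$ from theorem \ref{vetores-unicos}. By lemma \ref{remark-cambiodebase} every transversal plane bundle $\sigma$ arises as $\xi_i=\overline{\xi}_i-Z_i$ for a unique pair of tangent vector fields $Z_1,Z_2$, and since the frame of theorem \ref{vetores-unicos} is uniquely determined by $\sigma$ and $\mathfrak{u}$, distinct pairs give distinct bundles. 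Thus the transversal plane bundles near $p$ are parametrized bijectively by $(Z_1,Z_2)$, and writing $Z_1=aX_1+bX_2$, $Z_2=cX_1+dX_2$ it is enough to produce a unique quadruple $(a,b,c,d)$ for which $\sigma$ is antisymmetric equiaffine.

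Next I compute the variation of the induced connection. Comparing the two decompositions of $D_XY$ and using that $Z_1,Z_2$ are tangent, the second fundamental forms are unchanged and $\nabla_XY=\overline{\nabla}_XY+h^1(X,Y)Z_1+h^2(X,Y)Z_2$. Substituting this into $(\nabla g)(X,Y,W)=Xg(Y,W)-g(\nabla_XY,W)-g(Y,\nabla_XW)$, and using $g(X_i,X_j)=\delta_{ij}$ together with the normalizations $h^1(X_1,X_1)=0$, $h^2(X_i,X_j)=\delta_{ij}$ of theorem \ref{vetores-unicos}, each $(\nabla g)(X_i,X_j,X_k)$ becomes $(\overline{\nabla}g)(X_i,X_j,X_k)$ plus an explicit linear term in $a,b,c,d$. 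Imposing $B_1=B_2=C_1=C_2=0$ then yields the linear system
\begin{align*}
2c+2h^1(X_1,X_2)\,b &= \overline{B}_1,\\
2h^1(X_1,X_2)\,a+2h^1(X_2,X_2)\,b+2d &= \overline{B}_2,\\
3h^1(X_1,X_2)\,a+d &= \overline{C}_1,\\
h^1(X_2,X_2)\,a+3h^1(X_1,X_2)\,b+c &= \overline{C}_2,
\end{align*}
whose first two equations reproduce the ones already found in lemma \ref{lema-eq-equiafim}.

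The decisive step is to show this system is uniquely solvable. A direct evaluation of the determinant of its coefficient matrix gives
\[
\det\begin{pmatrix} 0 & 2h^1(X_1,X_2) & 2 & 0\\ 2h^1(X_1,X_2) & 2h^1(X_2,X_2) & 0 & 2\\ 3h^1(X_1,X_2) & 0 & 0 & 1\\ h^1(X_2,X_2) & 3h^1(X_1,X_2) & 1 & 0\end{pmatrix}=4\bigl(4h^1(X_1,X_2)^2+h^1(X_2,X_2)^2\bigr).
\]
Since $p$ is not an inflection, $h^1(X_1,X_2)$ and $h^1(X_2,X_2)$ do not vanish simultaneously, so this determinant is strictly positive; by continuity it stays nonzero on a neighborhood of $p$. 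Cramer's rule then produces unique smooth functions $a,b,c,d$, hence unique tangent fields $Z_1,Z_2$ and a unique transversal plane bundle $\sigma$ defined near $p$, which is the required antisymmetric equiaffine plane bundle.

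The main obstacle I anticipate is the bookkeeping in the last step: correctly assembling the $4\times 4$ coefficient matrix from the variation formulas and verifying that its determinant factors exactly as the non-inflection quantity $4(4h^1(X_1,X_2)^2+h^1(X_2,X_2)^2)$. Once the variation formula for $\nabla g$ is established, the remainder is routine linear algebra, and the passage from the unique solution $(a,b,c,d)$ to the uniqueness of $\sigma$ is guaranteed by the bijective parametrization from lemma \ref{remark-cambiodebase}.
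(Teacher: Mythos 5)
Your proposal is correct and takes essentially the same route as the paper: parametrize transversal plane bundles by pairs of tangent fields via lemma \ref{remark-cambiodebase}, convert the equiaffine and antisymmetry conditions into a linear system in $(a,b,c,d)$, and note that its determinant is a nonzero multiple of $4h^1(X_1,X_2)^2+h^1(X_2,X_2)^2$ precisely because $p$ is not an inflection. The only difference is organizational: the paper first passes to an equiaffine reference bundle (lemma \ref{lema-eq-equiafim}) so that the right-hand sides of the two equiaffine equations vanish, while you start from an arbitrary bundle and solve the full $4\times 4$ system at once, which is why your determinant $4\bigl(4h^1(X_1,X_2)^2+h^1(X_2,X_2)^2\bigr)$ is exactly four times the paper's.
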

\begin{proof}   {
Let $\mathfrak{u}=\{X_1,X_2\}$ be an orthonormal tangent frame on a neighborhood $U$ of $p$. We consider $\overline{\sigma}$ an equiaffine plane bundle defined on $U$ and $\{\overline{\xi}_1,\overline{\xi}_2\}$ the local basis of
$\overline{\sigma}$ obtained by theorem \ref{vetores-unicos}.}

  {Now, we want to construct a new antisymmetric equiaffine plane bundle $\sigma$ defined on $U$.}  Again by theorem \ref{vetores-unicos} we have $\{\xi_1,\xi_2\}$ a basis of $\sigma$, and by lemma \ref{remark-cambiodebase} we write
\[\xi_1=\overline{\xi}_1- Z_1, \hspace*{1.5cm} \xi_2=\overline{\xi}_2- Z_2,\]
where $Z_1$ and $Z_2$ are tangent vector fields. We denote by $\nabla$ (resp. $\overline{\nabla}$) the affine  connection induced by $\sigma$ (resp. $\overline{\sigma}$).  We compute $\overline{C}_1$ and $\overline{C}_2$
\begin{align*}
  \overline{C}_1 &= C_1+3 h^1(X_1,X_2)g(Z_1,X_1)+ g(Z_2,X_2), \\
  \overline{C}_2 &= C_2+3 h^1(X_1,X_2)g(Z_1,X_2)+h^1(X_2,X_2)g(Z_1,X_1)+ g(X_1,Z_2).
\end{align*}
Since $\sigma$ is antisymmetric, $C_1=C_2=0$ and writing  $Z_1=a X_1+ b X_2$ and $Z_2= c X_1+ d X_2$ we obtain the system
\begin{align*}
  0 &=  c +  b \overline{h}^1(X_1,X_2), \\
  0 &=  d +  a \overline{h}^1(X_1,X_2)+  b \overline{h}^1(X_2,X_2), \\
  \overline{C}_1 &= 3 a \overline{h}^1(X_1,X_2)+ d, \\
  \overline{C}_2 &= 3b \overline{h}^1(X_1,X_2)+ a \overline{h}^1(X_2,X_2)+ c.
\end{align*}
The  determinant of the linear system above in the variables $a, b, c, d$ is
$$4  \overline{h}^1(X_1,X_2)^2+  \overline{h}^1(X_2,X_2)^2\ne0,$$ since $p$ is not an inflection. Therefore, the system has a unique solution.
\end{proof}

\begin{thm}\label{teo-normalsymmafim}
  {With the same hypothesis as in theorem \ref{teo-normalantisymmafim}, there exists a unique symmetric equiaffine plane bundle $\sigma$ defined on a neighborhood of $p$.}
\end{thm}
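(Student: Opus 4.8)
The plan is to mirror, almost verbatim, the proof of Theorem \ref{teo-normalantisymmafim}, replacing the antisymmetry conditions $C_1=C_2=0$ by the symmetry conditions $D_1=D_2=0$. First I would fix an orthonormal tangent frame $\mathfrak{u}=\{X_1,X_2\}$ on a neighborhood $U$ of $p$ and, using Lemma \ref{lema-eq-equiafim}, choose an equiaffine plane bundle $\overline{\sigma}$ on $U$, with associated transversal frame $\{\overline{\xi}_1,\overline{\xi}_2\}$ coming from Theorem \ref{vetores-unicos}. Any candidate bundle $\sigma$ with its own frame $\{\xi_1,\xi_2\}$ is related to $\overline{\sigma}$ by Lemma \ref{remark-cambiodebase}, so I may write $\xi_1=\overline{\xi}_1-Z_1$, $\xi_2=\overline{\xi}_2-Z_2$ with $Z_1,Z_2$ tangent, and put $Z_1=aX_1+bX_2$, $Z_2=cX_1+dX_2$. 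A useful preliminary remark is that adding tangent fields leaves the second fundamental forms unchanged, $h^i=\overline{h}^i$, because the $\sigma$-components of $\overline{\xi}_i$ are exactly $\xi_i$; this keeps all coefficient formulas clean.

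The central computation is to express $\overline{D}_1,\overline{D}_2$ in terms of $D_1,D_2$ and the unknowns $a,b,c,d$. For this I would use that the connection difference is $\nabla_XY-\overline{\nabla}_XY=h^1(X,Y)Z_1+h^2(X,Y)Z_2$, whence $(\nabla g)-(\overline{\nabla}g)$ evaluated on frame vectors is bilinear in the quantities $g(Z_i,X_j)\in\{a,b,c,d\}$ with coefficients built from $h^1,h^2$. Substituting the normalizations of Theorem \ref{vetores-unicos}, namely $h^2(X_i,X_j)=\delta_{ij}$ and $h^1(X_1,X_1)=0$, and then imposing $D_1=D_2=0$ for $\sigma$, I expect expressions of the form
\[
\overline{D}_1 = a\,h^1(X_1,X_2)-d,\qquad
\overline{D}_2 = b\,h^1(X_1,X_2)-a\,h^1(X_2,X_2)-c.
\]
Together with the two equiaffine equations already recorded in Lemma \ref{lema-eq-equiafim}, which under $\overline{B}_1=\overline{B}_2=0$ read $c+b\,h^1(X_1,X_2)=0$ and $d+a\,h^1(X_1,X_2)+b\,h^1(X_2,X_2)=0$, this assembles into a linear system of four equations in $a,b,c,d$.

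Finally I would eliminate $c$ and $d$ through the two equiaffine equations, reducing the problem to a $2\times2$ system in $(a,b)$ with matrix
\[
\begin{pmatrix} 2h^1(X_1,X_2) & h^1(X_2,X_2) \\ -h^1(X_2,X_2) & 2h^1(X_1,X_2)\end{pmatrix},
\]
whose determinant is $4\,h^1(X_1,X_2)^2+h^1(X_2,X_2)^2$. The crux of the argument is to check that this determinant does not vanish: it is zero exactly when $h^1(X_1,X_2)=h^1(X_2,X_2)=0$, which is precisely the inflection condition excluded by hypothesis. Hence the system has a unique solution $(a,b,c,d)$, which determines a unique symmetric equiaffine bundle $\sigma$ on a neighborhood of $p$. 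I would emphasize that this is the same nonvanishing determinant that appears in Theorem \ref{teo-normalantisymmafim}, so once the correction formulas for $\overline{D}_1,\overline{D}_2$ are in hand the existence-and-uniqueness step is formally identical to the antisymmetric case; the only real labor is the bilinear bookkeeping that produces those two formulas.
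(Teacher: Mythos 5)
Your proposal is correct and follows essentially the same route as the paper: express a candidate bundle via Lemma \ref{remark-cambiodebase}, compute the change formulas $\overline{D}_1 = a\,h^1(X_1,X_2)-d$, $\overline{D}_2 = b\,h^1(X_1,X_2)-a\,h^1(X_2,X_2)-c$ (which match the paper's exactly), couple them with the two equiaffine equations, and invoke the nonvanishing of $4\,h^1(X_1,X_2)^2+h^1(X_2,X_2)^2$ at non-inflection points. The only cosmetic difference is that you eliminate $c,d$ to get a $2\times 2$ system in $(a,b)$, whereas the paper solves the $4\times 4$ system directly; both have the same determinant.
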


\begin{proof}   {We follow the same arguments as in the proof of theorem \ref{teo-normalantisymmafim}, but instead of $\overline{C}_1$ and $\overline{C}_2$, we compute $\overline{D}_1$ and $\overline{D}_2$:
\begin{align*}
  \overline{D}_1 &= D_1+ h^1(X_1,X_2)g(Z_1,X_1)- g(Z_2,X_2), \\
  \overline{D}_2 &= D_2+ h^1(X_1,X_2)g(Z_1,X_2)-h^1(X_2,X_2)g(Z_1,X_1)- g(X_1,Z_2).
\end{align*}
By writing  $Z_1=a X_1+ b X_2$ and $Z_2= c X_1+ d X_2$ we obtain again a linear  system in $a,b,c,d$:
\begin{align*}
  0 &=  c +  b \overline{h}^1(X_1,X_2), \\
  0 &=  d +  a \overline{h}^1(X_1,X_2)+  b \overline{h}^1(X_2,X_2), \\
  \overline{D}_1 &=  a \overline{h}^1(X_1,X_2)- d, \\
  \overline{D}_2 &=  b \overline{h}^1(X_1,X_2)- a \overline{h}^1(X_2,X_2)- c,
\end{align*}
whose determinant is again
$4  \overline{h}^1(X_1,X_2)^2+  \overline{h}^1(X_2,X_2)^2\ne0.
$}
\end{proof}

\begin{rem}\label{obs-normalafin} The lemma \ref{lema-eq-equiafim} and  theorem \ref{teo-normalantisymmafim} provide an algorithm to calculate a {basis} $\{\xi_1,\xi_2\}$ of the antisymmetric equiaffine  plane bundle: Let $\sigma$ an arbitrary transversal plane and $\nu_1$, $\nu_2$, $h^1(X_1,X_2)$, $h^1(X_2,X_2)$ are obtained by theorem \ref{vetores-unicos}. Denote by $\nabla$ the affine connection induced by $\sigma$ and we write
\begin{align*}
  \nabla_{X_1}X_1 &= a_1 X_1+ a_2 X_2, \\
  \nabla_{X_1}X_2 &= a_3 X_1+ a_4 X_2,\\
  \nabla_{X_2}X_1 &= a_5 X_1+ a_6 X_2, \\
  \nabla_{X_2}X_2 &= a_7 X_1+ a_8 X_2,
\end{align*}
then:
\begin{align*}
  \xi_1 &= \nu_1- a X_1- b X_2, \\
  \xi_2 &= \nu_2- c X_1- d X_2,
\end{align*}
where:
\begin{align*}
  a &= \frac{-2(a_2+a_3+a_5-a_8) h^1(X_1,X_2)-(a_4+a_6+a_7-a_1) h^1(X_2,X_2)}{4 h^1(X_1,X_2)^2+h^1(X_2,X_2)^2}, \\
  b &= \frac{-2(a_4+a_6+a_7-a_1) h^1(X_1,X_2)+(a_2+a_3+a_5-a_8) h^1(X_2,X_2)}{4 h^1(X_1,X_2)^2+h^1(X_2,X_2)^2}, \\
  c &= -(a_1+a_4+b h^1(X_1,X_2)), \\
  d &= -(a_5+a_8+a h^1(X_1,X_2)+bh^1(X_2,X_2)).
\end{align*}
\end{rem}

\begin{rem}\label{obs-simmetricnormalafin} Analogously,   {by} using the same notation   {as in} remark \ref{obs-normalafin}, we obtain   {from lemma \ref{lema-eq-equiafim} and  theorem \ref{teo-normalsymmafim}} the algorithm to compute   {a basis $\{\xi_1,\xi_2\}$ of} the symmetric equiaffine plane bundle:
\begin{align*}
  \xi_1 &= \nu_1- a X_1- b X_2, \\
  \xi_2 &= \nu_2- c X_1- d X_2,
\end{align*}
where:
\begin{align*}
  a &= \frac{2(a_2+a_3-3a_5-a_8) h^1(X_1,X_2)-(a_6+ a_7-a_1-3 a_4) h^1(X_2,X_2)}{4 h^1(X_1,X_2)^2+h^1(X_2,X_2)^2}, \\
  b &= \frac{2(a_6+ a_7-a_1-3 a_4) h^1(X_1,X_2)+(a_2+a_3-3a_5-a_8) h^1(X_2,X_2)}{4 h^1(X_1,X_2)^2+h^1(X_2,X_2)^2}, \\
  c &= -(a_1+a_4+b h^1(X_1,X_2)), \\
  d &= -(a_5+a_8+a h^1(X_1,X_2)+ b h^1(X_2,X_2)).
\end{align*}

\end{rem}

\section{Surfaces contained in a hypersurface}

  {We recall the definition of the Blaschke metric of an immersed hypersurface $N\subset \R^4$.} Let $\mathfrak{u}'=\{X_1,X_2,X_3\}$ be a tangent frame defined in some neighborhood $U$ of a point $p$ in $N$.  Now we consider
\[
H_{\mathfrak{\mathfrak{u}}'}(Y,Z)=[X_1,X_2,X_3,D_Z Y],\quad \forall Y,Z\in T_p N.
\]
  {Then $H_{\mathfrak{u}'}$ defines} a  symmetric bilinear form on $N$ that initially   {depends on} the tangent frame $\mathfrak{u}'$. However, if {we} suppose that $H_{\mathfrak{u}'}$ is non-degenerate, then we can normalize   {it} and the symmetric bilinear form
\[
\mathfrak G(Y,Z)=\frac{H_{\mathfrak{u}'}(Y,Z)}{(\det_{\mathfrak{u}'}H_{\mathfrak{u}'})^{\frac15}},\quad \forall Y,Z\in T_pN,
\]
does not  depend   {on the choice} of the tangent frame $\mathfrak{u}'$, where  $\det_{\mathfrak{u}'}H_{\mathfrak{u}'}=\det(H_{\mathfrak{u}'}(X_i,X_j)).$ The metric $\mathfrak G$ is called the  \textit{Blaschke metric} of $N$.
\medskip

  {If $N$ is locally strictly convex, then $H_{\mathfrak{u}'}$ is always non-degenerate and the tangent hyperplane $T_pN$ is a support hyperplane with a non-degenerate contact. In particular, given any immersed surface $M\subset N$ we have $T_pM \subset T_p N\subset \R^4$, and hence, $M$ is also locally strictly convex. Moreover, we can consider the Blaschke metric $\mathfrak G$ restricted to $M$.}

  {We claim that} we can choose a transversal vector field $\xi$ such that $g_{[\xi]}$ coincides with $\mathfrak G$ in $T_p M$. In fact, let $\mathfrak{u}=\{X_1,X_2\}$ be a frame in $T_p M$ and  we choose a tangent  vector field  $X_3\in T_p N$ such that $\mathfrak{u}'=\{X_1,X_2,X_3\}$ is a frame in $T_p N$, then
\[
G_{\mathfrak{u}}(Y,Z)=-H_{\mathfrak{u}'}(Y,Z),\quad \forall Y,Z\in T_pM.
\]
In particular, we have that $g_{[X_3]}=-\lambda \mathfrak G$ where $\lambda$ is given by
\[
\lambda=\frac{(\det_{\mathfrak{u}'}H_{\mathfrak{u}'})^{\frac15}}{(\det_\mathfrak{u}G_\mathfrak{u})^{\frac14}}.
\]
Then, it is enough to change the transversal vector field $X_3$ by $\xi=-X_3/\lambda^2$,   {so that} $g_{[\xi]}= \mathfrak G$.
\medskip

  {An interesting particular case} in this context are the immersed surfaces in affine hyperspheres.   {We recall here the definition of affine hypersphere (see for instance \cite{N} for details).}

\begin{defn} A hypersurface $H\subset \R^4$ is called an \emph{improper affine hypersphere} if the shape operator $S$ is identically 0. If $S=\lambda Id$, where $\lambda$ is a  nonzero constant, then $H$ is called a \emph{proper affine hypersphere}.
\end{defn}

\begin{exam}\label{example-paraboloide1}Immersed surface in an elliptic  paraboloid. We take  $M$   {as the} surface parameterized by
\[
X:(u,v)\mapsto (u,v,g(u,v),\frac12(u^2+v^2+g(u,v)^2)).
\]
Note that, $M$ is contained in an elliptic paraboloid $H$ given  by
\[H:(x,y,z)\mapsto(x,y,z,\frac12(x^2+y^2+z^2)).\]
From \cite{Li}, the Blaschke metric on $H$ is given by
\[\mathfrak{G}(e_i,e_j)=\delta_{ij},\]
where $e_1=(1,0,0,x)$, $e_2=(0,1,0,y)$ and $e_3=(0,0,1,z)$. Therefore, the Blaschke metric on $M$ is given by
\[\mathfrak{G}(X_u,X_u)=1+g_u^2,\quad\mathfrak{G}(X_u,X_v)=g_u g_v \hspace*{0.1cm}\mbox{ and }\hspace*{0.1cm}\mathfrak{G}(X_v,X_v)=1+g_v^2.\]
We can choose $\xi$ such that $g_{\xi}=\mathfrak{G}.$ By   {a simple computation,}
\[\xi=-\sqrt{1+g_u^2+g_v^2}(0,0,1,g).\]
\end{exam}

\begin{exam}\label{example-hiperboloid1}Immersed surface in   {a} hyperboloid of two sheets. We take  $M$   {as the} surface parameterized by
\[
X:(u,v)\mapsto (u,v,g(u,v),\sqrt{1+u^2+v^2+g(u,v)^2}).
\]
  {Then}, $M$ is contained in the hyperboloid of two sheets $H$
\[H:x_1^2+x_2^2+x_3^2-x_4^2=-1.\]
The Blaschke metric   {is} calculated in \cite[page 64]{Li}. We consider the metric field
  {\[\xi=\lambda(0,0,1,\frac{g}{\sqrt{1+u^2+v^2+g(u,v)^2}}),\]
where
\[\lambda=-\sqrt{1+g_u^2+g_v^2+(u g_u+v g_v-g(u,v))^2}.\]}
Then the Blaschke metric $\mathfrak{G}$ on $M$ coincides with $g_{\xi}$.    {It is not easy to check this computation by hand, but it is possible to do it with the aid of } the software Wolfram Mathematica. Explicitly the metric $g_{\xi}$ is given by
\begin{align*}
  g_{\xi}(X_u,X_u) &= (1+g_u^2) - \frac{(u+g g_u)^2}{1+u^2+v^2+g(u,v)^2}, \\
  g_{\xi}(X_u,X_v) &= g_u g_v - \frac{(u+g g_u)(v+ g g_v)}{1+u^2+v^2+g(u,v)^2}, \\
  g_{\xi}(X_v,X_v) &= (1+g_v^2) - \frac{(v+g g_v)^2}{1+u^2+v^2+g(u,v)^2}.
\end{align*}
\end{exam}

The examples \ref{example-paraboloide1} and \ref{example-hiperboloid1} are particular cases of affine hyperspheres  (hyperquadrics). The hypersurface  $Q(c,n)\subset \R^{n+1}$  (Li, Simon and Zhao, \cite{Li} ) is an example of an affine hypersphere which is not a hyperquadric:
\[Q(c,n): x_{n+1}=\frac{c}{x_1 x_2 \ldots x_n},\]
where $c=constant \neq 0$, $x_1>0, x_2>0, \ldots, x_n>0$.

\begin{exam}Immersed surface in $Q(1,3)$. We take  $M$   {as the} surface parameterized by
\[
X:(u,v)\mapsto (u,v,g(u,v),\frac{1}{u v g(u,v)}).
\]
Note that, $M$ is contained in $Q(1,3)$:
\[H:(x,y,z)\mapsto(x,y,z,\frac{1}{x y z}).\]
We consider the tangent frame  $\{e_1,e_2,e_3\}$ on $H$ to compute the Blaschke metric,  where $e_1 = (x,0,0,-\frac{1}{x y z)})$, $e_2 = (0,y,0,-\frac{1}{x y z})$, $e_3 = (0,0,z,-\frac{1}{x y z})$ and a transversal field $e_4=(x,y,z,\frac{-2}{x y z})$. By
  {taking derivatives,}
\begin{equation*}
    D_{e_1}e_1=(x,0,0,x,\frac{1}{x y z})=-e_1-2e_2-2e_3+2 e_4,
\end{equation*}
so $h_{11}=2$ which is the component $e_4$ of $D_{e_1}e_1$. Analogously we compute $h_{12}=h_{13}=h_{23}=1$ and $h_{22}=h_{33}=2$, therefore $\det(h_{ij})=4$. Finally
\[\mathfrak{G}_{ij}=\frac{h_{ij}}{(\det(h_{ij}))^{1/5}},\]
that is, $\mathfrak{G}_{ii}=2^{3/5}$ and $\mathfrak{G}_{ij}=\frac{1} {2^{2/5}}$ ($i\neq j$).   {We restrict} $\mathfrak{G}$ to the surface $M$ and obtain
\begin{equation*}
    \mathfrak{G}(X_u,X_u) = \frac{3 \times 2^{3/5}}{u^2}, \hspace*{0.5cm } \mathfrak{G}(X_u,X_v) = \frac{5}{2^{2/5}u v}, \hspace*{0.5cm } \mathfrak{G}(X_u,X_v)=\frac{3 \times 2^{3/5}}{v^2}.
\end{equation*}
It   {is enough} to consider the metric field
\[\xi=-\frac{\sqrt{2g^2+2(v g_v-u g_u)^2+(g+ v g_v+u g_u)^2}}{2^{4/5}g(u,v)}(0,0,g(u,v),\frac{-1}{u v g(u,v)}),\]
  {then we have} $g_{\xi}=\mathfrak{G}.$
\end{exam}

  {We have also the following obvious property, which will be used in the next section.}

\begin{prop}\label{prop-shape}Let $M\subset H \subset \R^4$ be an immersed surface in an affine hypersphere.   {Let $Y$ be} the affine normal to $H$ {and assume that $Y_p\in\sigma_p$, for all $p\in M$}. Then the shape operator $S_{Y}$ on $M$ is a multiple of the identity.
\end{prop}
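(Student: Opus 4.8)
The plan is to reduce everything to the defining structure equation of an affine hypersphere, so that no genuine computation is needed. Recall from the theory of equiaffine (Blaschke) hypersurfaces that the affine normal $Y$ of $H$ satisfies $D_ZY=-S(Z)$ for all $Z$ tangent to $H$, with no transversal component, where $S$ is the affine shape operator of $H$. Since $H$ is an affine hypersphere we have $S=\lambda\,\id$, with $\lambda$ a nonzero constant in the proper case and $\lambda=0$ in the improper case. In either situation the structure equation of $H$ reads
\[
D_ZY=-\lambda Z\qquad\text{for all }Z\in T_pH.
\]

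First I would observe that, because $T_pM\subset T_pN=T_pH$, every $X\in T_pM$ is in particular tangent to $H$, so the displayed equation applies verbatim and yields $D_XY=-\lambda X$. Next I would decompose $D_XY$ according to the splitting $T_p\R^4=T_pM\oplus\sigma_p$, which is exactly the splitting used to define the shape operator and the normal connection of $M$ through $D_XY=-S_YX+\nabla^{\bot}_XY$. This decomposition is legitimate precisely because the hypothesis $Y_p\in\sigma_p$ makes $Y$ a genuine transversal field of $M$. Now $-\lambda X$ already lies in $T_pM$, so its $\sigma_p$-component vanishes and its $T_pM$-component is $-\lambda X$ itself. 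Comparing the two expressions for $D_XY$ forces $\nabla^{\bot}_XY=0$ and $S_YX=\lambda X$ for every tangent $X$, that is, $S_Y=\lambda\,\id$.

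The only point requiring care — and it is a bookkeeping matter rather than a real obstacle — is to ensure that the shape operator measured on $M$ is computed with respect to the very field $Y$ that appears in the structure equation of $H$, which is exactly what the assumption $Y_p\in\sigma_p$ guarantees. Once this is in place the scalar nature of $S_Y$ is inherited immediately from the scalar (indeed constant) shape operator of the ambient affine hypersphere, and the proposition follows with essentially no calculation.
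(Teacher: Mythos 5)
Your proposal is correct and follows essentially the same route as the paper: both invoke the defining equation $D_XY=-\lambda X$ of an affine hypersphere, restrict it to $X\in T_pM\subset T_pH$, and read off $S_Y=\lambda\,\id$ from the decomposition $D_XY=-S_YX+\nabla^{\bot}_XY$ (the paper leaves the tangent/transversal bookkeeping implicit, while you spell it out and also note the byproduct $\nabla^{\bot}_XY=0$). No gap; nothing further is needed.
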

\begin{proof}
  {Since $H$ is an affine hypersphere, by definition there is $\lambda\in\R$ such that $D_XY=-\lambda X$, for all $X\in T_pH$. In particular, this is also true for all $X\in T_pM$, hence $S_Y=\lambda Id$.}
%We suppose that $H$ is locally  given by
%$(x,y,z)\rightarrow (x,y,z,f(x,y,z))$
%and $M$ parameterized by
%\[X(u,v)=(u,v,g(u,v),f(u,v,g(u,v)).\]
%Let $\mathfrak{u}=\{e_{1},e_{2},e_{3},e_{4}\}$ be an equiaffine frame on $H$ which is restrict to $M$,  we can write
%\begin{align*}
%  X_u &= a_{11}e_{1}+a_{12}e_{2}+a_{13}e_{3} \\
%  X_v &= a_{21}e_{1}+a_{22}e_{2}+a_{23}e_{3}.
%\end{align*}
%Now we consider $\nu_{1}$ a tangent vector on $H$ that is orthogonal to tangent plane on $M$ relative to Blaschke metric $\mathfrak{G}$, we can write
%\[\nu_{1}=a_{31}e_{1}+a_{32}e_{2}+a_{33}e_{3}.\]
%Now we choose the frame $\mathfrak{v}=\{\widehat{e}_1=X_{u},\widehat{e}_2=X_{v},\widehat{e}_3=\nu_{1},\widehat{e}_{4}=\frac{e_4}{det(a_{ij})}\}$ that is, an unimodular frame ($[X_u,X_v,\nu_1,\widehat{e}_4]=1$) since $X_u,X_v$ and $\nu_1$ are linearly independent. On the other hand note that
%
%$D_{\hat{e}_{j}} Y=D_{\sum_{k} a_{jk}e_{k}} Y$\\
%$\hspace*{1.45cm}=\sum_{k} a_{jk}D_{e_{k}}Y$\\
%$\hspace*{1.45cm}= \sum_{k} a_{jk}\sum_{i} -s_{ki}e_{i}$\\
%$\hspace*{1.45cm}=-\sum_{k} a_{jk}\sum_{i} s_{ki}\sum_{l} b_{il}\hat{e}_{l}$, where $(b_{ij})=(a_{ij})^{-1}$ and $S_Y=(s_{ki})$.\medskip\\
%Therefore $\hat{s}_{jl}=\sum_{k,i} a_{jk} s_{ki} b_{i}^{l}$ we obtain
%\[-\hat{S}_Y=A(-S_Y)B.\]
%Finally since $S_Y$ is a multiple of identity because $H$ is an affine hypersphere then
%\begin{align*}
%  D_{X_{u}}Y &= -\lambda X_{u}- 0 X_{v}- 0 \nu_{1} \\
%  D_{X_{v}}Y &= -0 X_{u}- \lambda X_{v}- 0 \nu_{1} \\
%  D_{\nu_1}Y &= -0 X_{u}- 0 X_{v}- \lambda \nu_{1}.
%\end{align*}
\end{proof}

\section{Affine semiumbilical surfaces.}
Let $M \subset\R^4$    {be a} locally strictly convex   {surface with a transversal plane bundle $\sigma$. Given $\nu\in\sigma_p$, the $\nu$-principal curvatures at $p$ are the eigenvalues of the shape operator $S_\nu$ and the $\nu$-principal directions are the associated eigenvectors. It is common to call a point $p\in M$ umbilic if $S_{\nu}$ is a multiple of the identity, for all $\nu\in\sigma_p$. In analogy with the Euclidean case we introduce also the concept of semiumbilic point.}

\begin{defn} A point $p$ in $M \subset \R^4$ is called   {\emph{$\sigma$-semiumbilic} if $S_{\nu}$ is a multiple of the identity, for some non zero $\nu\in\sigma_p$.}
  {We say that $M$ is  \emph{$\sigma$-semiumbilical} if all its points are $\sigma$-semiumbilic.}

  {In the case that $\sigma$ is either the antisymmetric or the symmetric equiaffine plane bundle, then we say that $M$ is either \emph{antisymmetric} or \emph{symmetric} affine semiumbilical, respectively.}
\end{defn}

  {We see now that the semiumbilic points are related to the vanishing of the normal curvature tensor. We fix a metric field $\xi$ on $M$}.
 Let $\mathfrak{u}=\{X_1,X_2\}$   {be an orthonormal} tangent frame   {$\{\xi_1,\xi_2\}$ the corresponding transversal frame given by theorem \ref{vetores-unicos}}.  We consider $X=a X_1+ b X_2$ and $Y=c X_1+ d X_2$.  We write
\begin{align*}
  S_1 X_1 &= \lambda_1 X_1+ \lambda_2 X_2,  \\
  S_1 X_2 &= \lambda_3 X_1+ \lambda_4 X_2.
\end{align*}
We have
\[R_{\nabla^{\bot}}(X,Y)\xi_1=h(X,S_1 Y)-h(Y,S_1 X).\]
  {By using the }relations above and the bilinearity  of $h$, we prove that
\[R_{\nabla^{\bot}}(X,Y)\xi_1=(ad-bc)((\lambda_4-\lambda_1)h^1(X_1,X_2)-\lambda_2(h^1(X_2,X_2))\nu_1+(\lambda_3-\lambda_2)\nu_2).\]
Therefore, $R_{\nabla^{\bot}}(X,Y)\xi_1=0$ if and only if
\begin{align}
 (\lambda_4-\lambda_1)h^1(X_1,X_2) &= \lambda_2 h^1(X_2,X_2), \label{eq-shape1}\\
  \lambda_3 &= \lambda_2\label{eq-shape2}.
\end{align}
Analogously, if we write:
\begin{align*}
S_2 X_1 &= \mu_1 X_1+ \mu_2 X_2,\\
S_2 X_2 &= \mu_3 X_1+ \mu_4 X_2,
\end{align*}
then, $R_{\nabla^{\bot}}(X,Y)\xi_2=0$ if and only if
\begin{align}
 (\mu_4-\mu_1)h^1(X_1,X_2) &= \mu_2 h^1(X_2,X_2), \label{eq-shape21}\\
  \mu_3 &= \mu_2\label{eq-shape22}.
\end{align}

\begin{thm}\label{normal-curvature}
  {Let $p\in M$, then} $R_{\nabla^{\bot}}(p)\equiv 0$ if and only if the following conditions   {hold}:
\begin{itemize}
  \item the shape operator   {$S_{\nu}$ is self-adjoint $\forall \nu\in\sigma_p$,} and
  \item   {either $p$ is $\sigma$-semiumbilic and all the $\nu$-principal configurations agree with the asymptotic configuration or $p$ is an inflection}.
\end{itemize}
\end{thm}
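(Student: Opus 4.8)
The plan is to reduce the statement to the four scalar identities \eqref{eq-shape1}--\eqref{eq-shape22} that have already been assembled, and then read off each pair in geometric terms. Since $\{\xi_1,\xi_2\}$ is a basis of $\sigma_p$ and the map $\nu\mapsto S_\nu$ is linear, $R_{\nabla^{\bot}}(p)\equiv 0$ is equivalent to the simultaneous vanishing of $R_{\nabla^{\bot}}(X,Y)\xi_1$ and $R_{\nabla^{\bot}}(X,Y)\xi_2$ for all $X,Y$; by the computations preceding the statement this happens exactly when \eqref{eq-shape1}, \eqref{eq-shape2}, \eqref{eq-shape21} and \eqref{eq-shape22} all hold.

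First I would dispatch the self-adjointness bullet. In the orthonormal frame $\mathfrak{u}$ the matrices of $S_1$ and $S_2$ are $\left(\begin{smallmatrix}\lambda_1 & \lambda_3\\ \lambda_2 & \lambda_4\end{smallmatrix}\right)$ and $\left(\begin{smallmatrix}\mu_1 & \mu_3\\ \mu_2 & \mu_4\end{smallmatrix}\right)$, so self-adjointness with respect to $g_\xi$ is simply symmetry of these matrices, i.e. $\lambda_2=\lambda_3$ and $\mu_2=\mu_3$. These are precisely \eqref{eq-shape2} and \eqref{eq-shape22}, and by linearity $S_\nu$ is self-adjoint for every $\nu\in\sigma_p$ if and only if both $S_1$ and $S_2$ are. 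This identifies the first bullet with that pair of equations.

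Next, and this is the crux, I would recognise the remaining two identities as a commutation relation. Writing $A=(h^1(X_i,X_j))=\left(\begin{smallmatrix}0 & b\\ b & c\end{smallmatrix}\right)$ with $b=h^1(X_1,X_2)$ and $c=h^1(X_2,X_2)$, a direct computation gives, once $S_1$ is symmetric, $A S_1-S_1 A=\big((\lambda_4-\lambda_1)b-\lambda_2 c\big)\left(\begin{smallmatrix}0 & 1\\ -1 & 0\end{smallmatrix}\right)$, and analogously for $S_2$. Hence, granting the self-adjointness already secured, \eqref{eq-shape1} and \eqref{eq-shape21} are equivalent to $AS_1=S_1A$ and $AS_2=S_2A$, that is, to $A$ commuting with every $S_\nu$.

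Finally I would split into the two alternatives and interpret this commuting condition. If $p$ is an inflection then $b=c=0$, so $A=0$ and commutation is automatic: this is the inflection case. If $p$ is not an inflection then $(b,c)\neq(0,0)$, so the symmetric matrix $A$ has two distinct eigenvalues, and by Lemma \ref{eigenvalue} (with $B$ the identity) its two eigenlines are exactly the asymptotic directions. Because $A$ has simple spectrum its commutant is $\operatorname{span}\{\operatorname{Id},A\}$; thus $A$ commuting with $S_\nu$ forces $S_\nu$ to be diagonal in the eigenbasis of $A$, which is the statement that the $\nu$-principal directions coincide with the asymptotic directions, while the linear functional extracting the $A$-component of $S_\nu$ has nontrivial kernel, producing a nonzero $\nu_0$ with $S_{\nu_0}$ a multiple of the identity, i.e. $\sigma$-semiumbilicity. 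The converse is obtained by running these equivalences backwards. The main obstacle is the bookkeeping at a semiumbilic direction, where $S_{\nu_0}$ is a multiple of the identity and its principal configuration degenerates: the phrasing must let every direction count as principal for such $\nu_0$, so that the genuine constraint is carried only by the $\nu$ with simple $S_\nu$-spectrum, while $\sigma$-semiumbilicity is extracted as a consequence rather than imposed as an extra hypothesis.
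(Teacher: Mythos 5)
Your proof is correct, but its core step runs along a genuinely different line from the paper's. Both arguments share the same skeleton: $R_{\nabla^{\bot}}(p)\equiv 0$ is reduced to the four identities \eqref{eq-shape1}--\eqref{eq-shape22}, and \eqref{eq-shape2}, \eqref{eq-shape22} are read as self-adjointness of $S_1$ and $S_2$, hence of every $S_\nu$ by linearity. The divergence is in the treatment of \eqref{eq-shape1} and \eqref{eq-shape21}. The paper stays in coordinates: at a non-inflection point it rewrites these as $(\lambda_4-\lambda_1,\lambda_2)=t(c,b)$ and $(\mu_4-\mu_1,\mu_2)=s(c,b)$, concludes that the $2\times 2$ determinant formed by these two rows vanishes, shows that this vanishing is exactly the existence of a nonzero $\nu$ with $S_\nu$ a multiple of the identity, and separately computes the asymptotic and $\nu$-principal configurations as explicit binary quadratic forms whose proportionality is again \eqref{eq-shape1} and \eqref{eq-shape21}. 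You instead recast the two identities as the commutation relations $AS_\nu=S_\nu A$ with $A=(h^1(X_i,X_j))$, observe that away from inflections $A$ is symmetric with simple spectrum and its eigenlines are the asymptotic directions (lemma \ref{eigenvalue} with $B=\mathrm{Id}$, courtesy of theorem \ref{vetores-unicos}), and extract both conclusions from the structure of the commutant $\operatorname{span}\{\mathrm{Id},A\}$: writing $S_\nu=f(\nu)\,\mathrm{Id}+g(\nu)A$, simultaneous diagonalization gives the agreement of configurations, and the nontrivial kernel of the linear functional $g$ on the two-dimensional space $\sigma_p$ gives semiumbilicity. Your route is more structural and makes explicit two things the paper leaves implicit: that semiumbilicity is a consequence of configuration agreement rather than an independent constraint, and that the statement requires the convention that a direction $\nu_0$ with $S_{\nu_0}$ a multiple of the identity counts as agreeing with the asymptotic configuration. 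The paper's computation, for its part, settles that same bookkeeping automatically---its agreement condition is the proportionality of the two quadratic forms, which holds vacuously when the principal form vanishes---and it yields a concrete determinantal criterion for semiumbilicity with no appeal to commutant theory.
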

\begin{proof}We suppose that $R_{\nabla^{\bot}}(p)\equiv 0$. By equations \eqref{eq-shape2} and \eqref{eq-shape22}   {it follows that} $\lambda_2=\lambda_3$ and $\mu_2=\mu_3$, in other words $S_1=S_{\nu_1}$ and  $S_2=S_{\nu_2}$ are self-adjoint. Now if $\nu=\alpha \nu_1+ \beta \nu_2$, then any $S_{\nu}=\alpha S_1+ \beta S_2$ is also self-adjoint.

If $p$  is not   {an} inflection, then $(b,c)=(h^1(X_1,X_2),h^1(X_2,X_2))\neq 0$, hence
\[\left|
    \begin{array}{cc}
      \lambda_4-\lambda_1 & \lambda_2 \\
      c & b \\
    \end{array}
  \right|=0 \Longleftrightarrow (\lambda_4-\lambda_1,\lambda_2)=t(c,b),
\]
\[ \left|
    \begin{array}{cc}
      \mu_4-\mu_1 & \mu_2 \\
      c & b \\
    \end{array}
  \right| =0\Longleftrightarrow (\mu_4-\mu_1,\mu_2)=s(c,b),\]
for some $t,s\in\R$, therefore
$\left|
    \begin{array}{cc}
      \lambda_4-\lambda_1 & \lambda_2 \\
      \mu_4-\mu_1 & \mu_2 \\
    \end{array}
  \right|=0.
$
\medskip\\
  {Moreover}, if $\nu=\alpha \nu_1+\beta \nu_2$, with $(\alpha,\beta)\neq 0$ then,
\[\alpha\left(
  \begin{array}{cc}
    \lambda_1 & \lambda_2 \\
    \lambda_3 & \lambda_4 \\
  \end{array}
\right)+ \beta \left(
                 \begin{array}{cc}
                   \mu_1 & \mu_2 \\
                   \mu_3 & \mu_4 \\
                 \end{array}
               \right)= \lambda \left(
                                  \begin{array}{cc}
                                    1 & 0 \\
                                    0 & 1 \\
                                  \end{array}
                                \right)\]
if and only if
\begin{align*}
  \alpha \lambda_1+ \beta \mu_1 &= \alpha \lambda_4+\beta \nu_4, \\
   \alpha \lambda_2+\beta \mu_2 &= 0,
\end{align*}
if and only if
\[\left|
    \begin{array}{cc}
      \lambda_4-\lambda_1 & \lambda_2, \\
      \mu_4-\mu_1 & \mu_2, \\
    \end{array}
  \right|=0.
\]
On the other   {hand}, the   {asymptotic configuration} is given by
\[\left|
    \begin{array}{ccc}
      y^2 & -xy & x^2 \\
      0 & b & c \\
      1 & 0 & 1 \\
    \end{array}
  \right|=-b x^2-c xy+by^2,
\]
and the $\nu$-principal configuration is given by:
$$\left|
    \begin{array}{ccc}
      y^2 & -xy & x^2 \\
      \alpha \lambda_1+\beta \mu_1 & \alpha \lambda_2+\beta \mu_2 & \alpha \lambda_4+\beta \mu_4 \\
      1 & 0 & 1 \\
    \end{array}
  \right|,
 $$
 which is equal to
$$-( \alpha \lambda_2+\beta \mu_2) x^2-(\alpha(\lambda_4-\lambda_1)+
  \beta(\mu_4-\mu_1)) xy+(\alpha \lambda_2+\beta \mu_2)y^2.
 $$
 Therefore, the   {asymptotic} configuration and the $\nu$-principal configuration are the same if and only if
\[(\alpha(\lambda_4-\lambda_1)+\beta(\mu_4-\mu_1))b- ( \alpha \lambda_2+\beta \mu_2)c=0,\]
or equivalently,
\begin{align*}
  b(\lambda_4-\lambda_1)-c\lambda_2 &= 0, \\
  b(\mu_4-\mu_1)-c\mu_2 &= 0.
\end{align*}
\end{proof}

%\textit{Immersed surfaces in locally strictly  convex hyperquadric}.
In   {the last part of} this section we will consider   {an immersed surface $M$ in a locally strictly convex hyperquadric $N$}. These hyperquadrics are particular cases of affine hyperspheres (elliptic paraboloid, ellipsoid and hyperboloid of two sheets), \cite{Li}. By affine transformation, the locally strictly convex hyperquadrics are   {equivalent to one of the} following normal forms:
\begin{itemize}
  \item Elliptic paraboloid $x_4=\frac{1}{2}(x^2_1+x^2_2+x^2_3).$
  \item Ellipsoid $x^2_1+x^2_2+x^2_3+x^2_4=1.$
  \item Hyperboloid of two sheets $x^2_1+x^2_2+x^2_3-x^2_4=-1.$
\end{itemize}

\begin{thm}\label{thm-hiperquadric} Let $M \subset \R^4$ be a locally strictly convex surface  immersed in an hyperquadric $N$. Then the affine normal field to $N$ belongs to both the antisymmetric and symmetric equiaffine plane bundles of $M$, 	
  {with respect to} the Blaschke metric $\mathfrak{G}$ restricted to $M$.
\end{thm}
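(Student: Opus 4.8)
The plan is to exhibit a single transversal plane bundle $\sigma$ that contains the affine normal $\nu$ to $N$ and is simultaneously equiaffine, symmetric and antisymmetric; the uniqueness statements in theorems \ref{teo-normalantisymmafim} and \ref{teo-normalsymmafim} will then force $\sigma$ to coincide with both the antisymmetric and the symmetric equiaffine plane bundles, and since $\nu\in\sigma$ the assertion follows.

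First I would set up the bundle. Let $\nu$ be the affine (Blaschke) normal of $N$ and, at each $p$, let $X_3\in T_pN$ be the $\mathfrak{G}$-orthogonal complement of $T_pM$ inside $T_pN$, so that $\mathfrak{G}(X_3,Z)=0$ for every $Z\in T_pM$. Put $\sigma_p=\operatorname{span}\{X_3,\nu\}$. Since $X_3\notin T_pM$ and $\nu\notin T_pN$, this is a genuine transversal plane bundle with $\nu\in\sigma$, and $[X_3]$ spans the class $[\xi]$ of the metric field with $g_\xi=\mathfrak{G}|_M$, as constructed in section 6. The Gauss formula for $N$ reads $D_XY=\nabla^N_XY+\mathfrak{G}(X,Y)\nu$ for $X,Y\in T_pM$, where $\nabla^N$ is the Blaschke connection of $N$; decomposing $\nabla^N_XY$ along $T_pN=T_pM\oplus\langle X_3\rangle$ shows that the connection $\nabla=\nabla(\sigma)$ induced on $M$ by $\sigma$ is precisely the tangential projection $\nabla^M$ of $\nabla^N$, while the second fundamental form along $\nu$ equals $\mathfrak{G}$.

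The key step is to prove that $\nabla g=0$ for this bundle, with $g=\mathfrak{G}|_M$. Because $N$ is a hyperquadric its Fubini--Pick cubic form vanishes, so by the Pick--Berwald theorem (see \cite{Li,N}) the Blaschke connection $\nabla^N$ is the Levi-Civita connection of $\mathfrak{G}$; in particular $\nabla^N\mathfrak{G}=0$. Writing $\nabla^M_XY=\nabla^N_XY-\phi(X,Y)X_3$ and using $\mathfrak{G}(X_3,Z)=0$, I obtain $\mathfrak{G}(\nabla^M_XY,Z)=\mathfrak{G}(\nabla^N_XY,Z)$ for all $X,Y,Z\in T_pM$, whence
\[(\nabla g)(X,Y,Z)=(\nabla^N\mathfrak{G})(X,Y,Z)=0.\]
Thus $\nabla=\nabla(\sigma)$ is a metric connection for $g$.

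Once $\nabla g\equiv 0$, the quantities $B_1,B_2$ of \eqref{eq-planoequiafim1}--\eqref{eq-planoequiafim2}, together with $C_1,C_2$ and $D_1,D_2$ in the definitions of the antisymmetric and symmetric plane bundles, all vanish identically, since each is a combination of components $(\nabla g)(X_i,X_j,X_k)$. Hence $\sigma$ is equiaffine, symmetric and antisymmetric at the same time; by the uniqueness in theorems \ref{teo-normalantisymmafim} and \ref{teo-normalsymmafim} it must equal both the antisymmetric and the symmetric equiaffine plane bundle, and as $\nu\in\sigma$ the affine normal to $N$ lies in both. The one place needing genuine care --- the main obstacle --- is the middle step: verifying that restricting the metric Blaschke connection of the quadric to $M$ along the $\mathfrak{G}$-orthogonal direction $X_3$ again yields a metric connection, i.e.\ that $\phi(X,Y)\mathfrak{G}(X_3,Z)$ is the only correction term and that it drops out by the choice of $X_3$. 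This is exactly where both the vanishing of the cubic form and the orthogonality of $X_3$ are essential; with any other transversal direction the correction survives and $\nabla g$ need not vanish.
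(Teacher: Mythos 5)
Your proposal is correct, but it takes a genuinely different route from the paper's proof. Both arguments rest on the same geometric input---the vanishing of the cubic form of a hyperquadric---but you use it conceptually, whereas the paper uses it computationally. You exhibit the candidate plane in advance, $\sigma=\operatorname{span}\{X_3,\nu\}$ with $X_3$ the $\mathfrak{G}$-orthogonal complement of $T_pM$ in $T_pN$, note that the induced connection $\nabla(\sigma)$ is the tangential projection of the Blaschke connection $\nabla^N$, and that the orthogonality of $X_3$ kills the only correction term, so $\nabla g=(\nabla^N\mathfrak{G})|_{TM}\equiv 0$; then every one of $B_1,B_2,C_1,C_2,D_1,D_2$ vanishes, and uniqueness (theorems \ref{teo-normalantisymmafim} and \ref{teo-normalsymmafim}) identifies $\sigma$ with both equiaffine bundles. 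The paper instead starts from an \emph{arbitrary} transversal bundle, normalizes it by theorem \ref{vetores-unicos}, builds the adapted frame $\{X_1,X_2,\nu_1,e_4\}$ with $e_4$ parallel to the affine normal of $N$, expands the identity $C(X,Y,Z)=(\nabla_X h)(Y,Z)\equiv 0$ into six scalar relations among the connection coefficients $a_i$ and the forms $\tau_1^j$, and feeds these into the explicit correction algorithms of remarks \ref{obs-normalafin} and \ref{obs-simmetricnormalafin}, verifying that the resulting generators satisfy $\overline{\nu}_2=\lambda_3\overline{\nu}_1+e_4$. What your route buys: it is much shorter, frame-free at the crucial step, and it renders the paper's two corollaries immediate (the coincidence of the symmetric and antisymmetric bundles, and their characterization as the $\mathfrak{G}$-orthogonal complement of the tangent plane---indeed your $\sigma$ is precisely the plane appearing in that second corollary). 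What the paper's computation buys: explicit formulas for the bundle generators starting from any transversal bundle, with no need to guess the answer. Two small points you should make explicit: the appeal to uniqueness requires that $p$ is not an inflection (the same implicit restriction underlying the paper's algorithms, whose denominators are $4h^1(X_1,X_2)^2+h^1(X_2,X_2)^2$), and equiaffinity itself should be recorded as following from $\nabla g=0$, either because a parallel metric has parallel volume form or directly via \eqref{eq-planoequiafim1}--\eqref{eq-planoequiafim2}.
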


\begin{proof}
Let $p \in M$, since $M\subset  N$ there is a transversal vector field $\nu_1$ on $M$, which is tangent to $N$ and such that $g_{[-\nu_1]}$ coincides with the Blaschke metric $\mathfrak{G}$ restricted to surface $M$. Now we consider a local orthonormal tangent frame $\{X_1,X_2\}$ on $M$ relative to the metric $g_{[-\nu_1]}=\mathfrak{G}$. We fix a local transversal plane bundle $\sigma$, by theorem \ref{vetores-unicos} there is a local basis $\{\nu_1,\nu_2\}$ on $\sigma$ such that :
\[[X_1,X_2,\nu_1,\nu_2]=1,\hspace*{0.2cm}\hspace*{0.2cm} h^1(X_1,X_1)=0, \hspace*{0.2cm}\hspace*{0.2cm} h^2(X_i,X_j)=\delta_{ij}.\]
Now we consider the local frame, $\{e_1,e_2,e_3,e_4\}$ such that $e_1=X_1$, $e_2=X_2$, $e_3=\nu_1$ and $e_4= f Y$ where $Y$ is the affine normal vector field to $N$ and $f$ is defined by condition $[e_1,e_2,e_3,e_4]=1$. Since $\nu_2$ is a transversal vector field on $M$ and $[X_1,X_2,\nu_1,\nu_2]=1$, we can conclude $\nu_2=\lambda_3 e_3+ e_4,$ for some $\lambda_3$.

By theorem \ref{vetores-unicos} we can write:
\begin{align*}
% \nonumber to remove numbering (before each equation)
  D_{e_{1}}e_1 &= a_1 e_1+ a_2 e_2+ \lambda_3 e_3+ e_4, \\
  D_{e_{1}}e_2 &= a_3 e_1+ a_4 e_2+ h^1(X_1,X_2)e_3, \\
  D_{e_{1}}e_3 &= \beta_1 e_1+ \beta_2 e_2+ (\tau_1^1(X_1)+\lambda_3 \tau_1^2(X_1))e_3+\tau_1^2(X_1)e_4, \\
  D_{e_{2}}e_1 &= a_5 e_1+a_6 e_2+h^1(X_1,X_2)e_3, \\
  D_{e_{2}}e_2 &= a_7 e_1+a_8 e_2+(h^1(X_2,X_2)+\lambda_3)e_3+e_4, \\
  D_{e_{2}}e_3 &= \beta_3 e_1+ \beta_4 e_2+(\tau_1^1(X_2)+\lambda_3\tau_1^2(X_2))e_3+\tau_1^2(X_2)e_4.
\end{align*}
We note that:
\begin{align*}
&h(e_1,e_1)=1,\ h(e_1,e_2)=0,\ h(e_1,e_3)=\tau_1^2(X_1),\\
&h(e_2,e_2)=1,\ h(e_2,e_3)=\tau_1^2(X_2).
\end{align*}
As $e_4$ is parallel to the affine normal, it follows:
\begin{align*}
% \nonumber to remove numbering (before each equation)
  a_1+a_4+\tau_1^1(X_1)+\lambda_3 \tau_1^2(X_1) &= 0, \\
  a_5+a_8+\tau_1^1(X_2)+\lambda_3 \tau_1^2(X_2) &= 0,
\end{align*}
Since $N$ is a hyperquadric, then $C(X,Y,Z):=(\nabla_X h)(Y,Z)\equiv 0$ (see \cite{N}):
\begin{align*}
0 &=C(e_1,e_1,e_1)=e_1(h(e_1,e_1))-2h(\nabla_{e_1}e_1,e_1)\\
&=-2h(a_1 e_1+ a_2 e_2+ \lambda_3 e_3,e_1)\\
&=-2a_1-2 \lambda_3 \tau_1^2(X_1).\\
0&=C(e_1,e_1,e_2)=e_1(h(e_1,e_2))-h(\nabla_{e_1}e_1,e_2)-h(e_1,\nabla_{e_1}e_2)\\
&=-h(a_1 e_1+ a_2 e_2+ \lambda_3 e_3,e_2)-h(e_1,a_3 e_1+ a_4 e_2+ h^1(X_1,X_2)e_3)\\
&=-a_2-\lambda_3 \tau_1^2(X_2)-a_3-h^1(X_1,X_2)\tau_1^2(X_1).\\
0&=C(e_1,e_2,e_2)=e_1(h(e_2,e_2))-2h(\nabla_{e_1}e_2,e_2)\\
&=-2h(a_3 e_1+ a_4 e_2+ h^1(X_1,X_2)e_3 ,e_2)\\
&=-2 a_4 -2 h^1(X_1,X_2)\tau_1^2(X_2).
\end{align*}
\begin{align*}
0&=C(e_2,e_1,e_1)=e_2(h(e_1,e_1))-2h(\nabla_{e_2}e_1,e_1)\\
&=-2h(a_5 e_1+a_6 e_2+h^1(X_1,X_2)e_3,e_1)\\
&=-2a_5-2h^1(X_1,X_2)\tau_1^2(X_1).\\
0&=C(e_2,e_1,e_2)=e_2(h(e_1,e_2))-h(\nabla_{e_2}e_1,e_2)-h(e_1,\nabla_{e_2}e_2)\\
&=-h( a_5 e_1+a_6 e_2+h^1(X_1,X_2)e_3,e_2)-h(e_1,a_7 e_1+a_8 e_2+(h^1(X_2,X_2)+\lambda_3)e_3)\\
&=-a_6-h^1(X_1,X_2)\tau_1^2(X_2)-a_7-(h^1(X_2,X_2)+\lambda_3)\tau_1^2(X_1).\\
0 &=C(e_2,e_2,e_2)=e_2(h(e_2,e_2))-2 h(\nabla_{e_2}e_2,e_2)\\
&=-2h(a_7 e_1+a_8 e_2+(h^1(X_2,X_2)+\lambda_3)e_3,e_2)\\
&=-2 a_8-2(h^1(X_2,X_2)+\lambda_3)\tau_1^2(X_2).
\end{align*}
In the antisymmetric case we have:
\begin{align*}
a_8-a_2-a_3-a_5 =&-(h^1(X_2,X_2)+\lambda_3)\tau_1^2(X_2)+(\lambda_3 +h^1(X_1,X_2))\tau_1^2(X_1))\\
& +h^1(X_1,X_2)\tau_1^2(X_1)\\
=&-h^1(X_2,X_2)\tau_1^2(X_2)+2h^1(X_1,X_2)\tau_1^2(X_1),
\end{align*}
\begin{align*}
a_1-a_4-a_6-a_7 =&-\lambda_3\tau_1^2(X_1)+h^1(X_1,X_2)\tau_1^2(X_2)+h^1(X_1,X_2)\tau_1^2(X_2)\\
&+(h^1(X_2,X_2)+\lambda_3)\tau_1^2(X_1)\\
=&2h^1(X_1,X_2)\tau_1^2(X_2)+h^1(X_2,X_2)\tau_1^2(X_1).
\end{align*}
That is,
\begin{align*}
% \nonumber to remove numbering (before each equation)
  2h^1(X_1,X_2)\tau_1^2(X_1)- h^1(X_2,X_2)\tau_1^2(X_2) &= a_8-a_2-a_3-a_5, \\
   h^1(X_2,X_2)\tau_1^2(X_1) + 2h^1(X_1,X_2)\tau_1^2(X_2)&= a_1-a_4-a_6-a_7.
\end{align*}
Therefore:
\begin{align*}
a_1+a_4+\tau_1^2(X_2)h^1(X_1,X_2)&=-\lambda_3\tau_1^2(X_1),\\
a_5+a_8+\tau_1^2(X_1)h^1(X_1,X_2)+\tau_1^2(X_2)h^1(X_2,X_2)&=-\lambda_3\tau_1^2(X_2).
\end{align*}

From remark \ref{obs-normalafin}, the affine normal plane is generated by the fields $\overline{\nu}_1,\overline{\nu}_2$, where:
\begin{align*}
% \nonumber to remove numbering (before each equation)
  \overline{\nu}_1 &= \nu_1-\tau_1^2(X_1)X_1-\tau_1^2(X_2)X_2, \\
  \overline{\nu}_2 &= \nu_2-\lambda_3\tau_1^2(X_1) X_1- \lambda_3 \tau_1^2(X_2)X_2.
\end{align*}
By substituting $\overline{\nu}_2=\lambda_3 \nu_1+e_4-\lambda_3\tau_1^2(X_1) X_1- \lambda_3 \tau_1^2(X_2)X_2$, it follows that
\[\overline{\nu}_2=\lambda_3 \overline{\nu}_1+e_4.\]

Analogously, in the symmetric case:\\
$a_2+a_3-3a_5-a_8=a_2+a_3-2a_5- (a_5+a_8)$\\
$=-\lambda_3\tau_1^2(X_2)-h^1(X_1,X_2)\tau_1^2(X_1)+2h^1(X_1,X_2)\tau_1^2(X_1)+\tau_1^1(X_2)+\lambda_3 \tau_1^2(X_2)$\\
$=h^1(X_1,X_2)\tau_1^2(X_1)+\tau_1^1(X_2)$\\
$=h^1(X_1,X_2)\tau_1^2(X_1)+h^1(X_1,X_2)\tau_1^2(X_1)+h^1(X_2,X_2)\tau_1^2(X_2)$\\
$=2h^1(X_1,X_2)\tau_1^2(X_1)+h^1(X_2,X_2)\tau_1^2(X_2),$\\
\medskip\\
$a_6+a_7-3a_4-a_1=a_6+a_7-2 a_4-(a_1+a_4)$\\
$\hspace*{3.2cm}=h^1(X_1,X_2)\tau_1^2(X_2)-h^1(X_2,X_2)\tau_1^2(X_1)+\tau_1^1(X_1).$\\
$\hspace*{3.2cm}=h^1(X_1,X_2)\tau_1^2(X_2)-h^1(X_2,X_2)\tau_1^2(X_1)+h^1(X_1,X_2)\tau_1^2(X_2)$\\
$\hspace*{3.2cm}=2h^1(X_1,X_2)\tau_1^2(X_2)-h^1(X_2,X_2)\tau_1^2(X_1).$
\medskip\\
That is,
\begin{align*}
% \nonumber to remove numbering (before each equation)
  2h^1(X_1,X_2)\tau_1^2(X_1)+h^1(X_2,X_2)\tau_1^2(X_2) &= a_2+a_3-3a_5-a_8, \\
  -h^1(X_2,X_2)\tau_1^2(X_1)+2h^1(X_1,X_2)\tau_1^2(X_2) &= a_6+a_7-3a_4-a_1.
\end{align*}
Therefore:\begin{align*}
&a_1+a_4+\tau_1^2(X_2)h^1(X_1,X_2)=-\lambda_3\tau_1^2(X_1),\\
&a_5+a_8+\tau_1^2(X_1)h^1(X_1,X_2)+\tau_1^2(X_2)h^1(X_2,X_2)=-\lambda_3\tau_1^2(X_2).
\end{align*}

From remark \ref{obs-simmetricnormalafin}, the affine normal plane is generated by the fields $\overline{\nu}_1,\overline{\nu}_2$ where:
\begin{align*}
% \nonumber to remove numbering (before each equation)
  \overline{\nu}_1 &= \nu_1-\tau_1^2(X_1)X_1-\tau_1^2(X_2)X_2, \\
  \overline{\nu}_2 &= \nu_2-\lambda_3\tau_1^2(X_1) X_1- \lambda_3 \tau_1^2(X_2)X_2.
\end{align*}
By substituting $\overline{\nu}_2=\lambda_3 \nu_1+e_4-\lambda_3\tau_1^2(X_1) X_1- \lambda_3 \tau_1^2(X_2)X_2$, it follows
\[\overline{\nu}_2=\lambda_3 \overline{\nu}_1+e_4.\]
\end{proof}

\begin{rem}\label{rem:exam} When we consider the affine metric of Burstin and Mayer \cite{BM}, then theorem \ref{thm-hiperquadric} fails. In fact,
we suppose  that $M$ is parameterized by
\[
X(u,v)=(u,v,u v, \frac{u^2+ v^2+ u^2 v^2}{2})
\]
Note that $M$ is contained in the paraboloid $w=\frac12(x^2+y^2+z^2)$. Now
by using \textit{Wolfram Mathematica} we compute the affine normal plane of Nomizu and Vrancken \cite{N-V} which is generated by
\[\nu_1=\frac{1}{12(1+u^2)^{2/3}(1+v^2)^{2/3}}(u,v, 2 u v , 12+ 3 v^2+ u^2(13+ 14v^2)) \mbox{ and }\]
\[\nu_2=\frac{1}{12((1+u^2)(1+v^2))^{1/6}}(\frac{5v}{1+v^2},\frac{5u}{1+u^2},\frac{-12-7v^2-7u^2-2u^2v^2}{(1+u^2)(1+v^2)},-14 uv).\]
We can see that $(0,0,0,1)$ does not belong to the plane generated by $\nu_1$ and $\nu_2.$
\end{rem}

  {The following corollary is deduced from the proof of theorem \ref{thm-hiperquadric}.}

\begin{cor}   {With the same hypothesis as in theorem \ref{thm-hiperquadric}, the antisymmetric and symmetric equiaffine plane bundles of $M$ are equal.}
\end{cor}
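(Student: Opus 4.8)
The plan is to deduce the corollary directly from the explicit computations already carried out in the proof of Theorem \ref{thm-hiperquadric}. Fix $p\in M$, an orthonormal frame $\mathfrak{u}=\{X_1,X_2\}$ for the Blaschke metric $\mathfrak{G}$, the adapted transversal frame $\{\nu_1,\nu_2\}$ furnished by Theorem \ref{vetores-unicos}, and the vector $e_4=fY$ parallel to the affine normal of $N$, exactly as in that proof. The strategy is to write down the local basis of the antisymmetric plane bundle produced by the algorithm of remark \ref{obs-normalafin} and, independently, the local basis of the symmetric plane bundle produced by the algorithm of remark \ref{obs-simmetricnormalafin}, and then to observe that under the hyperquadric hypothesis the two bases span the same plane.

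First I would record that, by remark \ref{obs-normalafin}, the antisymmetric bundle has the local basis
\begin{align*}
  \overline{\nu}_1 &= \nu_1-\tau_1^2(X_1)X_1-\tau_1^2(X_2)X_2,\\
  \overline{\nu}_2 &= \nu_2-\lambda_3\tau_1^2(X_1)X_1-\lambda_3\tau_1^2(X_2)X_2,
\end{align*}
and, using $\nu_2=\lambda_3\nu_1+e_4$, that $\overline{\nu}_2=\lambda_3\overline{\nu}_1+e_4$. Running the same computation through the symmetric algorithm of remark \ref{obs-simmetricnormalafin} returns exactly the same two vectors and again $\overline{\nu}_2=\lambda_3\overline{\nu}_1+e_4$. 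Consequently both plane bundles equal $\operatorname{span}\{\overline{\nu}_1,\overline{\nu}_2\}=\operatorname{span}\{\overline{\nu}_1,e_4\}$, and the corollary follows at once.

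The only substantive point, and the step I expect to require care, is verifying that the two a priori different algorithms really return the same correction terms. This is precisely where the hyperquadric hypothesis enters, through the Codazzi identity $C=(\nabla h)\equiv0$: the six equations $C(e_i,e_j,e_k)=0$ computed in the proof of Theorem \ref{thm-hiperquadric} pin down the connection coefficients $a_1,\dots,a_8$ so that, writing $b=h^1(X_1,X_2)$ and $c=h^1(X_2,X_2)$, the numerators appearing in both remark \ref{obs-normalafin} and remark \ref{obs-simmetricnormalafin} factor as $\tau_1^2(X_1)(4b^2+c^2)$ and $\tau_1^2(X_2)(4b^2+c^2)$. After dividing by the common denominator $4b^2+c^2\ne0$ (nonzero since $p$ is not an inflection), both algorithms yield the corrections $\tau_1^2(X_1)X_1+\tau_1^2(X_2)X_2$ and $\lambda_3(\tau_1^2(X_1)X_1+\tau_1^2(X_2)X_2)$, and hence the identical pair $\{\overline{\nu}_1,\overline{\nu}_2\}$. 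Since these factorizations have already been established case by case in the proof above, the corollary reduces to the observation that the two outputs agree, and no genuinely new obstacle arises.
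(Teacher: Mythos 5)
Your proposal is correct and follows essentially the same route as the paper: the paper itself deduces this corollary precisely from the computations in the proof of Theorem \ref{thm-hiperquadric}, where both the antisymmetric and symmetric algorithms (remarks \ref{obs-normalafin} and \ref{obs-simmetricnormalafin}) are shown, using the vanishing of the cubic form $C=(\nabla h)\equiv 0$ on the hyperquadric, to produce the identical generators $\overline{\nu}_1=\nu_1-\tau_1^2(X_1)X_1-\tau_1^2(X_2)X_2$ and $\overline{\nu}_2=\lambda_3\overline{\nu}_1+e_4$. One trivial remark: the identity $C\equiv 0$ you invoke is the Pick--Berwald characterization of hyperquadrics (the vanishing of the cubic form) rather than the Codazzi equation, which only asserts the total symmetry of $\nabla h$.
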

%\begin{defn}
%A transversal plane bundle $\sigma$ is an affine normal plane bundle if the antisymmetric and symmetric equiaffine planes coincides relative for some metric $g_{[\xi]}$.
%\end{defn}

Let $M\subset \R^4$ be an immersed surface in a hyperquadric $N$. We can consider on $M$ the   {extended Blaschke metric, by} writing $\mathfrak{G}(e_i,Y)=0$ for all $i=1,2,3.$ Here $\{e_1,e_2,e_3\}$ is an unimodular frame and $Y$ is the affine normal to $N$.

\begin{cor}   {With the same hypothesis as in} theorem \ref{thm-hiperquadric}, the antisymmetric (and symmetric) equiaffine plane to $M$ is the orthogonal plane to the tangent plane   {with respect to the extended} Blaschke metric $\mathfrak{G}.$
\end{cor}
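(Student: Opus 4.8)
The plan is to reuse directly the frame and the explicit computations from the proof of theorem \ref{thm-hiperquadric}, and then to identify the $\mathfrak{G}$-orthogonal complement of $T_pM$ by hand. Recall from that proof that we fixed an orthonormal frame $\{X_1,X_2\}$ of $T_pM$ relative to $\mathfrak{G}$, chose $e_3=\nu_1$ tangent to $N$ and $e_4=fY$ parallel to the affine normal $Y$ with $[e_1,e_2,e_3,e_4]=1$, and showed that both the antisymmetric and the symmetric equiaffine planes are spanned by $\overline{\nu}_1=\nu_1-\tau_1^2(X_1)X_1-\tau_1^2(X_2)X_2$ and $\overline{\nu}_2=\lambda_3\overline{\nu}_1+e_4$. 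Since $e_4$ is a nonzero multiple of $Y$, this plane equals $\operatorname{span}\{\overline{\nu}_1,Y\}$. Thus it suffices to prove that the $\mathfrak{G}$-orthogonal complement of $T_pM$ in $\R^4$ is exactly $\operatorname{span}\{\overline{\nu}_1,Y\}$.

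First I would compute this orthogonal complement directly. Writing a general vector as $w=a_1X_1+a_2X_2+a_3\nu_1+a_4Y$ and using that $\{X_1,X_2\}$ is $\mathfrak{G}$-orthonormal together with the extension rule $\mathfrak{G}(e_i,Y)=0$, the two conditions $\mathfrak{G}(w,X_1)=\mathfrak{G}(w,X_2)=0$ reduce to $a_1=-a_3\,\mathfrak{G}(\nu_1,X_1)$ and $a_2=-a_3\,\mathfrak{G}(\nu_1,X_2)$, leaving $a_3,a_4$ free. Hence the orthogonal complement is spanned by $Y$ and by the vector $\nu_1-\mathfrak{G}(\nu_1,X_1)X_1-\mathfrak{G}(\nu_1,X_2)X_2$.

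The key step is then to show that $\mathfrak{G}(\nu_1,X_i)=\tau_1^2(X_i)$ for $i=1,2$, after which this second generator coincides with $\overline{\nu}_1$ and the two planes agree. To establish this I would evaluate the Blaschke metric of $N$ on the frame $\{e_1,e_2,e_3\}$: by definition $\mathfrak{G}(e_i,e_j)=H(e_i,e_j)/(\det H)^{1/5}$ with $H(e_i,e_j)=[e_1,e_2,e_3,D_{e_j}e_i]$, and this is precisely the $e_4$-component of $D_{e_j}e_i$, which is symmetric in $i,j$ because the connection is torsion free. From the derivative formulas in the proof of theorem \ref{thm-hiperquadric} this $e_4$-component is the $h$ already recorded there; in particular $H(e_1,e_1)=1$ and $H(e_3,e_i)=\tau_1^2(X_i)$. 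The normalization is fixed by the orthonormality: $1=\mathfrak{G}(X_1,X_1)=H(e_1,e_1)/(\det H)^{1/5}=1/(\det H)^{1/5}$ forces $\det H=1$, and therefore $\mathfrak{G}(\nu_1,X_i)=H(e_3,e_i)=\tau_1^2(X_i)$, as needed.

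The only real subtlety, and the place where care is required, is exactly this normalization: one must notice that the orthonormality of $\{X_1,X_2\}$ pins down $(\det H)^{1/5}=1$, so that the Blaschke inner products $\mathfrak{G}(\nu_1,X_i)$ equal the unnormalized coefficients $\tau_1^2(X_i)$ with no stray scalar factor. Everything else is a routine bookkeeping check, and the argument applies verbatim to both the antisymmetric and the symmetric plane, since the previous corollary already shows that they coincide.
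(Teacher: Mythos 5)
Your proposal is correct and takes essentially the same route as the paper: both arguments evaluate the unnormalized Blaschke form $H$ on the adapted frame $\{X_1,X_2,\nu_1\}$, read off $H(X_i,X_j)=\delta_{ij}$ and $H(\nu_1,X_i)=\tau_1^2(X_i)$ from the derivative formulas in the proof of theorem \ref{thm-hiperquadric}, and conclude that the generators $\overline{\nu}_1$ and $\overline{\nu}_2=\lambda_3\overline{\nu}_1+e_4$ of the equiaffine plane are $\mathfrak{G}$-orthogonal to $T_pM$. The only cosmetic difference is that the paper checks orthogonality of the generators directly, letting the common factor $(\det H)^{-1/5}$ cancel, whereas you first compute the orthogonal complement and then fix the normalization $\det H=1$ from the orthonormality of $\{X_1,X_2\}$; both steps are valid and rest on the same computation.
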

\begin{proof}By theorem \ref{thm-hiperquadric}, the antisymmetric equiaffine plane is generated by
\begin{align*}
% \nonumber to remove numbering (before each equation)
  \overline{\nu}_1 &= \nu_1-\tau_1^2(X_1)X_1-\tau_1^2(X_2)X_2, \\
  \overline{\nu}_2 &= \nu_2-\lambda_3\tau_1^2(X_1) X_1- \lambda_3 \tau_1^2(X_2)X_2.
\end{align*}
Now we consider $\mathfrak u'=\{X_1,X_2,\nu_1\}$ the tangent frame on $N$ and see that $\{\overline{\nu}_1,\overline{\nu}_2\}$ are orthogonal to the tangent plane:
\begin{align*}
\mathfrak{G}(\overline{\nu}_1,X_1)&=\mathfrak{G}(\nu_1,X_1)-\tau_1^2(X_1)\mathfrak{G}(X_1,X_1)\\
&=\displaystyle{\frac{H(\nu_1,X_1)}{(\det_{\mathfrak u'}H_{u'}) ^{1/5}}-\tau_1^2(X_1)\frac{H(X_1,X_1)}{(\det_{\mathfrak u'}H_{u'})^{1/5}}}\\
&=\displaystyle{\frac{\tau_1^2(X_1)}{(\det_{\mathfrak u'}H_{u'})^{1/5}}-\tau_1^2(X_1)\frac{1}{(\det_{\mathfrak u'}H_{u'})^{1/5}}=0}.
\end{align*}
Analogously,
\begin{align*}
&\mathfrak{G}(\overline{\nu}_1,X_2)=\mathfrak{G}(\nu_1,X_2)-\tau_1^2(X_2)\mathfrak{G}(X_2,X_2)=0,\\
&\mathfrak{G}(\overline{\nu}_2,X_1)=\mathfrak{G}(\overline{\nu}_2-e_4,X_1)=\lambda_3\mathfrak{G}(\overline{\nu}_1,X_1)=0,\\
&\mathfrak{G}(\overline{\nu}_2,X_2)=\mathfrak{G}(\overline{\nu}_2-e_4,X_2)=\lambda_3\mathfrak{G}(\overline{\nu}_1,X_2)=0.
\end{align*}
\end{proof}
%Para uma hiperqu?drica podemos calcular explicitamente a conex?o $\overline{\nabla}$ induzida pelo plano normal afim
%\begin{eqnarray*}
%% \nonumber to remove numbering (before each equation)
%  \overline{\nabla}_{X_1}X_1 &=& -(a_3+h^1(X_1,X_2)\tau_1^2(X_1))X_2 \\
%  \overline{\nabla}_{X_1}X_2 &=& (a_3+h^1(X_1,X_2)\tau_1^2(X_1))X_1 \\
%  \overline{\nabla}_{X_2}X_1 &=& (a_6+\tau_1^2(X_2)h^1(X_1,X_2))X_2 \\
%  \overline{\nabla}_{X_2}X_2 &=& -(a_6+\tau_1^2(X_2)h^1(X_1,X_2))X_1
%\end{eqnarray*}
%isto ? $\overline{a}_1=\overline{a}_4=\overline{a}_5=\overline{a}_8=0$ e $\overline{a}_2+\overline{a}_3=\overline{a}_5+\overline{a}_7=0$.\medskip\\
%Alem disso, podemos calcular as tor??es
%$$\overline{\tau}_1^1(X_i)=\overline{\tau}_1^2(X_i)=\overline{\tau}_2^2(X_i)=0 , i=1,2$$
\begin{cor}  {Any surface $M\subset \R^4$ immersed in a hyperquadric $N$  is antisymmetric (and symmetric) affine semiumbilical.}
\end{cor}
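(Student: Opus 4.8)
The plan is to combine Theorem \ref{thm-hiperquadric} with Proposition \ref{prop-shape}, observing that both apply directly to the situation at hand. The starting point is that a locally strictly convex hyperquadric $N$ is an affine hypersphere: as noted in the discussion preceding Theorem \ref{thm-hiperquadric}, the elliptic paraboloid, the ellipsoid and the hyperboloid of two sheets are all affine hyperspheres. Thus, for an immersed surface $M\subset N\subset\R^4$, the hypotheses of Proposition \ref{prop-shape} are available once we identify a suitable transversal plane bundle containing the affine normal to $N$.

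First I would fix an arbitrary point $p\in M$ and let $Y$ denote the affine normal field to $N$. Let $\sigma$ stand for either the antisymmetric or the symmetric equiaffine plane bundle of $M$, taken with respect to the Blaschke metric $\mathfrak{G}$ restricted to $M$. By Theorem \ref{thm-hiperquadric}, the affine normal $Y$ belongs to $\sigma$; that is, $Y_p\in\sigma_p$ for all $p\in M$, and $Y_p\neq0$ since $Y$ is a nonzero transversal field.

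Next I would invoke Proposition \ref{prop-shape}. Since $N$ is an affine hypersphere with affine normal $Y$, and since we have just checked that $Y_p\in\sigma_p$ for all $p$, the proposition gives that the shape operator $S_Y$ on $M$ is a multiple of the identity. Taking $\nu=Y_p$, this means there is a nonzero $\nu\in\sigma_p$ for which $S_\nu$ is a multiple of the identity, which is exactly the defining condition for $p$ to be $\sigma$-semiumbilic.

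Finally, as $p\in M$ was arbitrary, $M$ is $\sigma$-semiumbilical for each of the two choices of $\sigma$, so $M$ is both antisymmetric and symmetric affine semiumbilical. There is essentially no obstacle in this deduction: the substantive work has already been done in the (long) proof of Theorem \ref{thm-hiperquadric}, which places the affine normal of $N$ inside both equiaffine plane bundles. The only point deserving attention is to verify that the global hypothesis $Y_p\in\sigma_p$ required by Proposition \ref{prop-shape} is precisely what Theorem \ref{thm-hiperquadric} supplies, so that the two results dovetail without any further computation.
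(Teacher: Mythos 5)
Your proposal is correct and follows exactly the paper's own route: the paper's proof of this corollary is precisely the one-line deduction from Theorem \ref{thm-hiperquadric} together with Proposition \ref{prop-shape}, which you have simply spelled out in detail (identifying the locally strictly convex hyperquadric as an affine hypersphere, using Theorem \ref{thm-hiperquadric} to place its affine normal $Y$ in each equiaffine plane bundle $\sigma$, and applying Proposition \ref{prop-shape} with $\nu=Y_p$ to get $\sigma$-semiumbilicity at every point).
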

\begin{proof}
This result follows from theorem \ref{thm-hiperquadric} and proposition \ref{prop-shape}.
\end{proof}

\begin{exam} The product of plane curves is  also antisymmetric and symmetric affine semiumbilical with respect to some metric field. We consider the product of two plane curves parameterized by affine arc length, as
\[X(u,v)=(\alpha_1(u),\alpha_2(u),\beta_1(v),\beta_2(v))\]
and consider the transversal vector field $\xi=(0,\frac{1}{\alpha_1^{'}(u)},0,-\frac{1}{\beta_1^{'}(v)})$.\medskip\\
The metric of the transversal field $g_{\xi}$ is given by
\[g_{\xi}(X_u,X_u)=1, \hspace*{1.0cm}g_{\xi}(X_u,X_v)=0, \hspace*{1.0cm}g_{\xi}(X_v,X_v)=1.\]
Now consider the transversal plane bundle $\sigma=span\{X_{vv},X_{uu}\}$. By theorem \ref{vetores-unicos} we obtain
\begin{align*}
% \nonumber to remove numbering (before each equation)
  \xi_1 &= (-\alpha_1^{''}(u),-\alpha_2^{''}(u),\beta_1^{''}(v),\beta_2^{''}(v)), \\
  \xi_2 &= (\alpha_1^{''}(u),\alpha_2^{''}(u),0,0).
\end{align*}
Since $\nabla_{X_i}X_j=0$ for $i=u,v$ and $j=u,v$ then $\nabla g=0$. Therefore, the plane generated by $\xi_1$ and $\xi_2$   {is the antisymmetric and the symmetric equiaffine plane}. Finally, by a simple calculation we note that the normal curvature tensor $R_{\nabla^{\bot}}\equiv 0$ and by theorem \ref{normal-curvature} it follows that the product of curves is  also antisymmetric (and symmetric) affine semiumbilical.
\end{exam}

\begin{exam}
In the case of immersed surfaces in affine hyperspheres $Q(c,n)$, in general the symmetric and antisymmetric equiaffine planes bundle are not  equal with respect to the Blaschke metric. Moreover, we have examples of immersed surfaces  in $Q(c,n)$ which are semiumbilical and some others which are not:
\begin{itemize}
  \item The surface parameterized by
\[(u,v)\mapsto (u,v,u v,\frac{1}{u^2 v^2})\]
is symmetric and antisymmetric affine semiumbilical,
  \item and the surface parameterized by
\[(u,v)\mapsto (u,v,v^2+ u^3,\frac{1}{u v (v^2+u^3)})\]
is not symmetric nor antisymmetric affine semiumbilical.
\end{itemize}
\end{exam}
%%Entonces el vector m?trico que genera la m?trica de Blaschke es:
%%$$\xi=-\sqrt{1+g_u^2+
%%   g_v^2}(0,0,1,g).
%%   $$
%%Se cumple $\mathfrak G(\xi,\xi)=1+g_u^2+
%%   g_v^2$. Adem?s, si cambiamos a cualquier otro representante $[\xi']=[\xi]$ se tiene que $\xi'=\xi+pX_u+qX_v$ y en este caso tambi?n se verifica que para todos $p,q$:
%%  $$\mathfrak G(\xi',\xi')>1.
%%   $$

\bibliographystyle{amsplain}
%\bibliography{Convex}

\end{document}